\DeclareMathOperator*{\argmin}{arg min} 
\DeclareMathOperator*{\supp}{supp}
\newcommand{\defby}{\overset{\mathrm{\scriptscriptstyle{def}}}{=}}
\newtheorem{bigthm}{Theorem}
\newcommand{\R}{\mathbb{R}}
\newcommand{\C}{\mathbb{C}}
\newcommand{\Z}{\mathbb{Z}}
\newcommand{\N}{\mathbb{N}}
\newcommand{\vct}[1]{\bm{#1}}
\newcommand{\mtx}[1]{\bm{#1}}
 \newcommand{\scalprod}[1]{\left\langle #1  \right \rangle}
\newtheorem{theorem}{Theorem}[]
\newtheorem{lemma}[theorem]{Lemma}
\newtheorem{proposition}[theorem]{Proposition}
\newtheorem{cor}[theorem]{Corollary}
\newtheorem*{mainthm}{Main Theorem}
\newtheorem{remark}[theorem]{Remark}
\newtheorem{definition}[theorem]{Definition}
\begin{document}

\title{Near-optimal compressed sensing guarantees for total variation minimization}

\author{Deanna Needell\footnote{Claremont McKenna College, 850 Columbia Ave, Claremont CA, 91711, email: dneedell@cmc.edu.}   and Rachel Ward\footnote{University of Texas at Austin, 2515 Speedway, Austin, Texas, 77842, email: rward@math.utexas.edu.  R. Ward has been supported in part by a Donald D. Harrington Faculty Fellowship, Alfred P. Sloan Research Fellowship, and DOD-Navy grant N00014-12-1-0743.}}
\maketitle
  \begin{abstract}
Consider the problem of reconstructing a multidimensional signal from an underdetermined set of measurements, as in the setting of compressed sensing.  Without any additional assumptions, this problem is ill-posed.  However, for signals such as natural images or movies, the minimal \emph{total variation} estimate consistent with the measurements often produces a good approximation to the underlying signal, even if the number of measurements is far smaller than the ambient dimensionality.  
This paper extends recent reconstruction guarantees for two-dimensional images $\mtx{x} \in \C^{N^2}$ to signals $\mtx{x} \in \C^{N^d}$ of arbitrary dimension $d \geq 2$ and to isotropic total variation problems.  To be precise, we show that a multidimensional signal $\mtx{x} \in \C^{N^d}$ can be reconstructed from ${\cal O}(s d\log(N^d))$ linear measurements ${\bm y} = \mtx{A}\mtx{x}$ using total variation minimization to within a factor of the best $s$-term approximation of its gradient.  The reconstruction guarantees we provide are necessarily optimal up to polynomial factors in the spatial dimension $d$.  
\end{abstract}

\section{Introduction}

Compressed sensing (CS) is an emerging signal processing methodology where signals are acquired in compressed form as undersampled linear measurements.    The applications of CS are abundant, ranging from radar and error correction to many areas of image processing~\cite{CSwebpage}.  The underlying assumption that makes such acquisition and reconstruction possible is that most natural signals are \textit{sparse} or \textit{compressible}.  We say that a signal $\mtx{x}\in\C^{p}$ is $s$-sparse when
\begin{equation}\label{eq:sparse}
\|\mtx{x}\|_0 \defby |\supp(\mtx{x})| \leq s \ll p.
\end{equation}
Compressible signals are those which are well-approximated by sparse signals.  
In the CS framework, we acquire $m\ll p$ nonadaptive linear measurements of the form

$$
\vct{y} = {\cal M}(\mtx{x}) + \vct{\xi},
$$
where ${\cal M}: \C^p \rightarrow \C^m$ is an appropriate linear operator and $\vct{\xi}$ is vector modeling additive noise.  The theory of CS (\cite{donoho1989uncertainty,CT05:Decoding,donoho2006compressed}) ensures that under suitable assumptions on ${\cal M}$, compressible signals can be approximately reconstructed by solving an  $\ell_1$ minimization problem:
\begin{equation}\tag{$L_1$}
\hat{\mtx{x}} = \argmin_{\vct{w}} \|\vct{w}\|_1 \quad\text{such that}\quad \|{\cal M}(\vct{w}) - \vct{y}\|_2 \leq \varepsilon;
\end{equation}
above, $\|\vct{w}\|_1 = \sum_i |w_i|$ and $\|\vct{w}\|_2 = \left(\sum_i |w_i|^2\right)^{1/2}$ denote the standard $\ell_1$ and Euclidean norms, $\varepsilon$ bounds the noise level $\|\vct{\xi}\|_2 \leq \varepsilon$, and $\argmin$ denotes the set of minimizers (we use the notation of equality rather than set notation here and use the convention that if the minimizer is not unique, we may choose a solution lexicographically; however, in this particular case under standard compressed sensing assumptions it is known that the solution is indeed unique~\cite{zhang2012necessary}).  The program $(L_1)$ may be cast as a second order cone program (SOCP) and can be solved efficiently using standard convex programming methods (see e.g.~\cite{boyd2004convex, combettes2011proximal}).

One property of the operator ${\cal M}$ that guarantees sparse signal recovery via $(L_1)$, as introduced by Cand\`es and Tao  in~\cite{CT05:Decoding}, is the restricted isometry property (RIP).
\begin{definition}\label{def:rip}
A linear operator ${\cal M} : \mathbb{C}^{p} \rightarrow \mathbb{C}^m$ is said to have the restricted isometry property (RIP) of order $s \in \mathbb{N}$ and level $\delta \in (0,1)$  if
\begin{align}
\label{rip}
(1 - \delta) \| \mtx{x} \|_2^2 \leq \| {\cal M}(\mtx{x}) \|_2^2 \leq (1 + \delta) \| \mtx{x} \|_2^2 \hspace{12mm} \textrm{ for all}\hspace{2mm} s \textrm{-sparse } \mtx{x} \in \mathbb{C}^p.
\end{align}
\end{definition}
Many distributions of random matrices of dimension $m \times p$, including randomly subsampled rows from the discrete Fourier transform~\cite{RV08:sparse} or from a bounded orthonormal system more generally~\cite{RV08:sparse,rw11,ra09-1, rw10,bdwz}, and randomly-generated circulant matrices~\cite{RRT12:circulant}, are known to generate RIP matrices of order $s$ and level $\delta \leq c < 1$ if the number of measurements  satisfies $m \approx \delta^{-2} s\log^4(p)$.  Note that here and throughout we have used the notation $u \approx v$ (analogously $u \lesssim v$) to indicate that there exists some absolute constant $C > 0$ such that $u = C v$ ($u \leq C v$).  Moreover, a matrix whose entries are independent and identical (i.i.d.) realizations of a properly-normalized subgaussian random variable will have the RIP with probability exceeding $1 - e^{-cm}$ once $m \approx \delta^{-2} s \log (p/s)$~\cite{CT04:Near-Optimal,MPJ06:Uniform,RV08:sparse,badadewa08}.

Cand\`es, Romberg, and Tao~\cite{CRT06:Stable} showed that when the measurement operator ${\cal M}$ has the RIP of order ${\cal{O}}(s)$ and sufficiently small constant $\delta$, the program $(L_1)$ recovers an estimation $\hat{\mtx{x}}$ to $\mtx{x}$ that satisfies the error bound
\begin{equation}\label{eq:L1}
\|\hat{\mtx{x}} - \mtx{x}\|_2 \leq C\left(\frac{\|\mtx{x} - \vct{x_s}\|_1}{\sqrt{s}} + \varepsilon \right),
\end{equation}
where $\vct{x_s}$ denotes the best $s$-sparse approximation to the signal $\mtx{x}$. 
Using properties about Gel'fand widths of the $\ell_1$ ball due to Kashin~\cite{Kas77:The-widths} and Garnaev--Gluskin~\cite{GG84:On-widths}, this is the optimal minimax reconstruction rate for $\ell_1$-minimization using $m \approx s \log (p/s)$ nonadaptive linear measurements.  Due to the rotational-invariance of an RIP matrix with randomized column signs \cite{kw}, a completely analogous theory holds for signals that are compressible with respect to a known orthonormal basis or tight frame $\mtx{D}$ by replacing $\vct{w}$ with $\mtx{D}^{*}\vct{w}$ inside the $\ell_1$-norm of the minimization problem $(L_1)$~\cite{CENR_Compressed,liu2012compressed}.

\subsection{Imaging with CS} 

Natural images are highly compressible with respect to their gradient representation.  
For an image $\mtx{x} \in \C^{N^2}$ one defines its discrete directional derivatives by 
\begin{eqnarray}
\mtx{x}_u : \C^{N^2} \rightarrow \C^{(N-1) \times N}, \quad \quad (\mtx{x}_u)_{j,k} &=&  \mtx{x}_{j+1,k} - \mtx{x}_{j,k}  \label{Xx} \\
\mtx{x}_v : \C^{N^2} \rightarrow \C^{N \times (N-1)}, \quad \quad
(\mtx{x}_v)_{j,k} &=& \mtx{x}_{j,k+1} - \mtx{x}_{j,k}.
  \end{eqnarray}


The discrete gradient transform $\nabla: \C^{N^2} \rightarrow \C^{N^2 \times 2}$ is defined in terms of the directional derivatives,
\begin{equation}
\label{grad2d}
\big((\nabla \mtx{x})_{j,k,1}, (\nabla \mtx{x})_{j,k,2} \big) \defby \left\{ \begin{array}{ll}
\big( (\mtx{x}_u)_{j,k}, (\mtx{x}_v)_{j,k} \big), & 1 \leq j \leq N-1, \quad 1 \leq k \leq N-1 \nonumber \\
\big( 0, (\mtx{x}_v)_{j,k} \big), & j=N, \quad 1 \leq k \leq N-1 \nonumber \\
\big((\mtx{x}_u)_{j,k}, 0 \big), & k=N, \quad 1\leq j \leq N-1 \nonumber \\
\big(0,0 \big), & j=k=N
\end{array}
\right.
\end{equation}
The \emph{anisotropic} total variation seminorm is defined as
\begin{equation}
\| \mtx{x} \|_{TV_1} \defby \sum_{j,k=1}^N \left| (\nabla \mtx{x})_{j,k,1} \right| + \left| (\nabla \mtx{x})_{j,k,2} \right|,
\end{equation}
and the \emph{isotropic} total variation seminorm as
\begin{equation}\label{eq:TV2d}
\| \mtx{x} \|_{TV_2} \defby \sum_{j,k=1}^N \big( (\nabla \mtx{x})^2_{j,k,1} + (\nabla \mtx{x})^2_{j,k,2} \big)^{1/2}.
\end{equation}
The recovery guarantees we derive apply to both anisotropic and isotropic total variation semi norms, and we use the notation $\| \mtx{x} \|_{TV}$ to refer to either choice of seminorm.  For brevity of presentation, we provide details only for the isotropic total variation seminorm, but refer the reader to~\cite{nwtrAniso} for the analysis of the anisotropic variant.

The total variation seminorm is a regularizer of choice in many image processing applications.  That is, given a collection of noisy linear measurements ${\cal M}(\mtx{x}) + \mtx{\xi}$ with $\| \mtx{\xi} \|_2 \leq \varepsilon$ of an underlying image ${\bf x} \in \C^{N^2}$, total variation minimization is used to pick from among the possibly infinitely-many images consistent with these measurements:
\begin{equation}\tag{TV}
\mtx{\hat{x}} = \argmin_{\mtx{z}} \| \mtx{z} \|_{TV}  \quad  \textrm{ such that }  \quad \| {\cal M}(\mtx{z}) -\vct{y} \|_2   \leq \varepsilon
\end{equation}
Properties of TV minimizers in inverse problems have been studied in the discrete and continuous settings~\cite{baus2013fully,nikolova2012exact,dibos1999global,rudin1992nonlinear,ring2000structural,osher2005iterative,caselles2007discontinuity,caselles2011regularity}, and convergence rates of stability measures for TV have also been established \cite{burger2004convergence,grasmair2009locally}.
In the setting of compressed sensing and more broadly in other imaging applications, total variation regularization has been used for denoising, deblurring, and inpainting  (see e.g.~\cite{CRT06:Stable,crt,candes05pr,osher2003image,chan2006total,lustig2007sparse,lustig2008compressed,liu2011total,nett2008tomosynthesis,mayin,kai2008suppression,keeling2003total,Yuan-et-al-09a,nwtv} and the references therein).  {\bf In this article, we focus on recovery guarantees for (TV) in the compressed sensing setting.  Along the way, we derive strengthened Sobolev inequalities for discrete signals lying near the null space of operators incoherent with the Haar wavelet basis, and we believe that such bounds should be useful in a broader context for understanding the connection between total variation minimization and $\ell_1$-wavelet coefficient minimization.}

While (TV) is similar to the $\ell_1$-minimization program $(L_1)$, the RIP-based theoretical guarantees for $(L_1)$ do not directly translate to recovery guarantees for (TV) because the gradient map $\mtx{z} \rightarrow \nabla \mtx{z}$ is not well-conditioned on the orthogonal complement of $\ker(\nabla$).  In fact, viewed as an invertible operator over mean-zero images, the condition number of the gradient map is proportional to the image side length $N$.\footnote{One sees that the norm of $\nabla$ is a constant whereas the norm of its inverse is proportional to $N$ (one can observe this scaling, for example, by noting it is obtained by the image whose entries are constant).}
For anisotropic $(TV)$, recovery guarantees in the compressed sensing setting were obtained in~\cite{nwtv} for two-dimensional images.  
\begin{bigthm}[from~\cite{nwtv}]\label{introThm}
For a number of measurements $m \gtrsim s\log(N^2/s)$, there are choices of linear operators ${\cal M}: \C^{N^2} \rightarrow \C^m$ for which the following holds for any image $\mtx{x} \in \C^{N^2}$: Given noisy measurements $\vct{y} = {\cal M}(\mtx{x}) + \vct{\xi}$ with noise level $\| \vct{\xi} \|_2 \leq \varepsilon$, the reconstructed image
\begin{eqnarray}
\mtx{\hat{x}} = \argmin_{\mtx{z}} \| \mtx{z} \|_{TV_1}  \quad  \textrm{such that}  \quad \| {\cal M}(\mtx{z}) - \vct{y} \|_2   \leq \varepsilon
\end{eqnarray}
satisfies the error bound
\begin{equation}
\label{stabley}
\| \mtx{x}-\mtx{\hat{x}} \|_2 \lesssim  \log(N^2/s)\Big( \frac{ \| \nabla\mtx{x} - (\nabla\mtx{x})_s \|_1}{\sqrt{s}}+ \varepsilon\Big).
\end{equation}
Here and throughout, $\vct{z}_s$ denotes the best $s$-term approximation to the array $\vct{z}$.
\end{bigthm}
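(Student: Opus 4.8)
The plan is to choose the measurement operator to be simultaneously compatible with the standard sparsity basis and the bivariate Haar wavelet basis, and then to bridge the gradient and Haar representations through a discrete Sobolev inequality. Concretely, I would take ${\cal M}$ to be a matrix that (i) satisfies the RIP of order $\gtrsim s$ and (ii) whose composition ${\cal M}\CH^{-1}$ with the inverse bivariate Haar transform $\CH$ also satisfies the RIP of comparable order. Both properties can be arranged simultaneously once $m \gtrsim s\log(N^2/s)$: starting from any RIP matrix and randomizing its column signs, the theorem of Krahmer and Ward~\cite{kw} guarantees that the RIP is preserved after composition with the fixed orthonormal transform $\CH$.

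Write $h = \hat{\mtx{x}} - \mtx{x}$ for the reconstruction error. Two facts are immediate. First, since both $\mtx{x}$ and $\hat{\mtx{x}}$ are feasible for the program, $\| {\cal M} h \|_2 \le 2\eps$. Second, minimality of the total variation seminorm, $\| \hat{\mtx{x}}\|_{TV_1} \le \|\mtx{x}\|_{TV_1}$, yields the standard cone constraint on the gradient: letting $S$ index the $s$ largest-magnitude entries of $\nabla\mtx{x}$, one obtains $\|(\nabla h)_{S^c}\|_1 \lesssim \|(\nabla h)_S\|_1 + \|\nabla\mtx{x} - (\nabla\mtx{x})_s\|_1$, and hence $\|\nabla h\|_1 \lesssim \sqrt{s}\,\|\nabla h\|_2 + \|\nabla\mtx{x} - (\nabla\mtx{x})_s\|_1$.

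The technical heart is a strengthened Sobolev inequality that bounds $\|h\|_2$ directly in terms of $\|\nabla h\|_1$ and $\|{\cal M} h\|_2$. I would prove it by expanding $h$ in the bivariate Haar basis and splitting into a head (the $k \approx s$ largest Haar coefficients) and a tail. The tail is governed by a discrete Sobolev/Besov embedding: the bivariate Haar coefficients of any image are compressible whenever its gradient is, with the $\ell_2$ energy at each dyadic scale controlled by $\|\nabla h\|_1$; summing the $\mathcal{O}(\log N)$ scales produces the logarithmic loss and bounds the tail by a term of order $\log(N^2/s)\,\|\nabla h\|_1/\sqrt{s}$. The head is a $k$-sparse Haar vector, so the RIP of ${\cal M}\CH^{-1}$ transfers its $\ell_2$ norm to $\|{\cal M} h\|_2$ up to the already-controlled tail. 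Combining the two pieces gives $\|h\|_2 \lesssim \log(N^2/s)\big(\|\nabla h\|_1/\sqrt{s} + \eps\big)$.

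It remains to remove the residual dependence on $\|\nabla h\|_1$. Feeding in the cone bound from the second paragraph reduces the whole argument to controlling $\|\nabla h\|_2$, the full $\ell_2$ energy of the error's gradient, by quantities free of $\|h\|_2$. I expect this to be the main obstacle, and precisely the point where total variation differs essentially from $\ell_1$ recovery: because $\nabla$ has condition number proportional to $N$ on mean-zero images, one cannot invert $\nabla h \mapsto h$, and the crude bound $\|\nabla h\|_2 \lesssim \|h\|_2$ would only reintroduce $\|h\|_2$ on the right with a $\log$ prefactor that cannot be absorbed. I would instead bound $\|\nabla h\|_2$ through a robust sparse-recovery estimate for the gradient, using the RIP of ${\cal M}$ together with the Haar embedding to supply effective measurements of $\nabla h$; this yields $\|\nabla h\|_2 \lesssim \|\nabla\mtx{x} - (\nabla\mtx{x})_s\|_1/\sqrt{s} + \eps$. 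Inserting this into the strengthened Sobolev inequality produces the claimed bound $\| \mtx{x}-\hat{\mtx{x}} \|_2 \lesssim \log(N^2/s)\big(\|\nabla\mtx{x} - (\nabla\mtx{x})_s\|_1/\sqrt{s} + \eps\big)$. The delicate bookkeeping is to carry the head/tail splitting through both RIP conditions so that the logarithmic factors accumulate only once.
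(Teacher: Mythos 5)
Your overall architecture --- cone constraint from TV minimality, tube constraint from feasibility, a Cohen--Dahmen--Daubechies--DeVore compressibility bound on the Haar coefficients driving a strengthened Sobolev inequality, then combination with a gradient-recovery estimate --- is exactly the architecture of the proof in \cite{nwtv} (and of the Main Theorem of this paper). But there is a genuine gap at the step you yourself flag as the main obstacle, and it is not mere bookkeeping: with your choice of ${\cal M}$ (a single matrix satisfying the RIP both on its own and after composition with ${\cal H}^{-1}$), the ``robust sparse-recovery estimate for the gradient'' has nothing to run on. Sparse recovery of $\nabla h$ via a Proposition~\ref{cone-tube}-type argument needs both a cone constraint \emph{and} a tube constraint on $\nabla h$, i.e.\ a bound $\| {\cal B}(\nabla h) \|_2 \lesssim \eps$ for some operator ${\cal B}$ with RIP on gradient space. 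Feasibility only gives $\| {\cal M}(h) \|_2 \leq 2\eps$ --- measurements of $h$, not of $\nabla h$ --- and there is no generic way to manufacture measurements of the gradient out of RIP measurements of the signal: the RIP of ${\cal M}$ or of ${\cal M}{\cal H}^{-1}$ says nothing about how ${\cal M}$ interacts with $\nabla h$, since $\nabla$ is badly conditioned (as you note) and does not map sparse vectors to sparse vectors in any controlled way.

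The way \cite{nwtv} (and this paper) closes this hole is by building the measurement ensemble with extra structure: in \eqref{measure:M}, alongside the Haar-incoherent component ${\cal A}$, the operator contains for each direction $\ell$ two zero-padded copies $[{\cal B}_\ell]^{0_\ell}$ and $[{\cal B}_\ell]_{0_\ell}$ of a base RIP operator ${\cal B}_\ell$. The summation-by-parts identity of Lemma~\ref{padderiv} then gives ${\cal B}_\ell(h_{r_\ell}) = [{\cal B}_\ell]^{0_\ell}(h) - [{\cal B}_\ell]_{0_\ell}(h)$, so the feasibility bound on $\| {\cal M}(h) \|_2$ converts into an honest tube constraint on the gradient, $\| {\cal B}(\nabla h) \|_2 \lesssim \eps$, and Proposition~\ref{cone-tube} then delivers exactly the bound $\| \nabla h \|_2 \lesssim \| \nabla \mtx{x} - (\nabla \mtx{x})_s \|_1 / \sqrt{s} + \eps$ that you assert. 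Whether the theorem holds for your unstructured choice of ${\cal M}$ is, as far as this paper knows, open: the Remark following the gradient-recovery section explicitly \emph{conjectures} that all measurements apart from those in ${\cal A}$ are artifacts of the proof, but does not prove it. So your proposal is not a proof as it stands; it becomes one only after replacing your measurement ensemble with the structured one and inserting Lemma~\ref{padderiv} plus Proposition~\ref{cone-tube} at the step you left as an expectation. (The rest of your outline --- randomized column signs to get RIP in the Haar domain, head/tail splitting with the wavelet compressibility bound of Proposition~\ref{cor:cdpx} producing the logarithmic factor --- matches Theorem~\ref{strongsobo} and is sound.)
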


In words, the total variation minimizer estimates $\mtx{x}$ to within a factor of the noise level and best $s$-term approximation error of its gradient.  The  bound in \eqref{stabley} is optimal up to the logarithmic factor $\log(N^2/s)$.

{\bf The contribution of this paper is twofold: We extend the recovery guarantees of Theorem~\ref{introThm} to the multidimensional setting $\bm{x \in \C^{N^d}}$ and to the setting of isotropic total variation minimization, for arbitrary dimension $\bm{d \geq 2}$.}   The precise statement of results is given in Theorem \ref{thm:gen}.  The proofs involve extending the Sobolev inequalities for random subspaces from~\cite{nwtv} to higher-dimensional signal structures, using bounds of Cohen, Dahmen, Daubechies, and DeVore in \cite{cddd} on the compressibility of wavelet representations in terms of the bounded variation of a function, which hold for functions in dimension $d \geq 2$.  Hence, our results for total variation, do not hold in dimension $d=1$.  See~\cite{vaiter2011robust} for results on one-dimensional total variation under assumptions other than the RIP.

\subsection{Organization}
The article is organized as follows.  In Section~\ref{sec:high} we recall relevant background material on the  multidimensional total variation seminorm and multidimensional orthonormal wavelet transform.  Section~\ref{sec:main} states our main result: total variation minimization provides stable signal recovery for signals of arbitrary dimension $d\geq 2$.  The proof of this result will occupy the remainder of the paper;  in Section~\ref{stabgrad} we prove that the signal \emph{gradient} is recovered stably, while in Section~\ref{sec:sob} we pass from stable gradient recovery to stable signal recovery using strengthened Sobolev inequalities which we derive for random subspaces.  The proofs of propositions and theorems used along the way are contained in the appendix.

\section{Preliminaries for multidimensional signal analysis}\label{sec:high}
The setting for this article is the space $\C^{N^d}$ of multidimensional arrays of complex numbers,
$$
\mtx{x} = \left(x_{\alpha} \right), \quad \alpha \equiv (\alpha_1, \alpha_2, \dots, \alpha_d) \in \{1,2, \dots, N\}^d.  
$$
From here on out, we will use the shorthand $[N]^d = \{1,2, \dots, N \}^d$.   We will also use the convention that vectors such as $\vct{x}$ (apart from index vectors $\alpha$) are boldface and their scalar components such as $x_{\alpha}$ are normal typeface.
We also treat $\C^{N^d}$ as the Hilbert space equipped with inner product 
\begin{equation}
\label{inner}
\scalprod{\mtx{x}, \mtx{y} } = \sum_{\alpha \in [N]^d} x_{\alpha} \cdot \bar{y}_{\alpha},
\end{equation}
where $\bar{{y}}$ denotes the conjugate of ${y}$.
This Hilbert space is isometric\footnote{Recall that $f \in L_2(Q)$ if $\int_{Q} | f(u) |^2 du < \infty$, and $L_2(Q)$ is a Hilbert space equipped with the inner product $\scalprod{f, g} = \int_{Q} f(u) \cdot \bar{g}(u) du$. }  to the subspace $\Sigma^d_N \subset L_2\big( [0,1)^d \big)$ of functions which are constant over cubes $[ \frac{\alpha_i - 1}{N}, \frac{\alpha_i}{N} )_{i=1}^d$ of side length $N^{-1}$, and the isometry is provided by identifying $x_{\alpha} \in \C^{N^d}$ with the function $f \in \Sigma_N^d$ satisfying $f(u) = N^{d/2} x_{\alpha}$ for $u \in [ \frac{\alpha_i - 1}{N}, \frac{\alpha_i}{N} )_{i=1}^d$. 
More generally, we denote by $\|\mtx{x}\|_p = \left(\sum_{\alpha \in [N]^d}|{x}_{\alpha}|^p\right)^{1/p}$ the entrywise $\ell_p$-norm of the signal $\mtx{x}$. 

For $\ell = 1, 2, \ldots d$, the discrete derivative of $\mtx{x}$ in the direction of $r_{\ell}$ is the array ${\vct{x}_{r_{\ell}}} \in \mathbb{C}^{N^{\ell-1}\times(N-1)\times N^{d-\ell}}$ defined component-wise by 
\begin{equation}\label{eq:multidd}
\left(\vct{x}_{r_{\ell}}\right)_{\alpha} \defby  x_{(\alpha_1, \alpha_2, \dots, \alpha_{{\ell}}+1, \dots, \alpha_d)} - x_{(\alpha_1, \alpha_2, \dots, \alpha_{\ell}, \dots \alpha_d)},
\end{equation}
and we define the $d$-dimensional discrete gradient transform $\nabla: \C^{N^d} \rightarrow \C^{d \times N^d}$ through its components
\begin{equation}
\label{grad}
\big( \nabla \mtx{x}  \big)_{\alpha} = \big( \nabla \mtx{x}  \big)_{\alpha,\ell} \defby \left\{ \begin{array}{ll}
 (x_{r_{\ell}})_{\alpha}, & \alpha_{\ell} \leq N-1,  \\
 0, & \textrm{else}
 \end{array} \right.
 \end{equation}
The $d$-dimensional \emph{anisotropic} total variation seminorm is defined as $\| \mtx{x} \|_{TV_1} \defby \|\nabla \mtx{x}\|_1$, while the \emph{isotropic} total variation seminorm is a mixed $\ell_1$-$\ell_2$ norm of the $d$-dimensional discrete gradient,
 
\begin{equation}\label{TV2}
\| \mtx{x} \|_{TV_2} \defby \sum_{\alpha\in[N]^d} \left(\sum_{\ell = 1}^d \big( \nabla \mtx{x}  \big)_{\alpha,\ell}^2\right)^{1/2}  \defby \| \nabla \mtx{x} \|_{1,2}.
\end{equation}

A linear operator ${\cal A}: \C^{N^d} \rightarrow \C^{r}$ can be regarded as a sequence of multidimensional arrays componentwise,
\begin{equation}\label{ip}
y_k = [{\cal A}(\mtx{x})]_k = \scalprod{ {\bm a}_k, \mtx{x} },
 \end{equation}
and a linear operator ${\cal A}: \C^{N^d} \rightarrow \C^{N^d}$ can be expressed similarly through its components $y_{\alpha} = [{\cal A}(\mtx{x})]_{\alpha} = \scalprod{ {\bm a}_{\alpha}, \mtx{x} }$.
If ${\cal A}:  \C^{N^d} \rightarrow \C^{r_1}$ and ${\cal B}: \C^{N^d} \rightarrow \C^{r_2}$ then the \emph{row direct sum operator} ${\cal M} = {\cal A} \oplus_r {\cal B}$ is the linear operator from $\C^{N^d}$ to $\C^{r_1 + r_2}$ with component arrays ${\cal M} = (\mtx{m}_k)_{k=1}^{r_1+r_2}$ given by
\begin{equation}
\label{rdirectsum}
\mtx{m}_k = \left\{ \begin{array}{ll} 
\mtx{a}_k,& 1 \leq k \leq r_1, \nonumber \\
\mtx{b}_{k-r_1}, & 1+r_1 \leq k \leq r_1 + r_2.
\end{array} \right.
\end{equation}
Alternatively, for linear operators ${\cal A}: \C^{N^d} \rightarrow \C^r$ and ${\cal B}: \C^{N^d} \rightarrow \C^r$, the  column direct sum operator ${\cal N} = {\cal A} \oplus_c {\cal B} : \C^{N^d \times 2} \rightarrow \C^{r}$ has component arrays ${\cal N} = (\mtx{n}_k)_{k=1}^{r}$ given by 
\begin{equation}
\label{cdirectsum}
(\mtx{n}_k)_{\alpha, \ell} = \left\{ \begin{array}{ll} 
(\mtx{a}_k)_{\alpha},& \ell=1, \nonumber \\
(\mtx{b}_k)_{\alpha}, & \ell=2.
\end{array} \right.
\end{equation}

{\subsection{The multidimensional Haar wavelet transform}~\label{backgrndwave}}
The Haar wavelet transform provides a sparsifying basis for natural signals such as images and movies, and is closely related to the discrete gradient.   For a comprehensive introduction to wavelets, we refer the reader to \cite{d}.

The (continuous) multidimensional Haar wavelet basis is derived from a tensor-product representation of the univariate Haar basis, which forms an orthonormal system for square-integrable functions on the unit interval and consists of the constant function 
$$
h^{0}(t) = \left\{ \begin{array}{ll} 1 & 0 \leq t < 1, \nonumber \\
0, & \textrm{otherwise},
\end{array} \right.
$$ 
the step function
$$
h^{1}(t) = \left\{ \begin{array}{ll} 1 & 0 \leq t < 1/2, \nonumber \\
-1 & 1/2 \leq t < 1,
\end{array} \right.
$$ 
and dyadic dilations and translations of the step function,
\begin{equation}
\label{haarsystem}
h_{j,k}(t) = 2^{j/2} h^{1}(2^j t - k); \quad j \in \N, \quad 0 \leq k < 2^j.
\end{equation}
The Haar basis for the higher dimensional space $L_2(Q)$ of square-integrable functions on the unit cube $Q = [0,1)^d$ consists of tensor-products of the univariate Haar wavelets.  Concretely, for $V=\{0,1\}^d - \{0\}^d$ and $\vct{e} = (e_1, e_2, \dots, e_d) \in V$, we define the multivariate functions $h^{\vct{e}}: L_2(Q) \rightarrow L_2(Q)$ by
$$
h^{\vct{e}}(u) = \prod_{e_i}h^{e_i}(u_i).
$$
The orthonormal Haar system on $L_2(Q)$ is then comprised of the constant function along with all functions of the form  
\begin{equation}
\label{bivariateH1}
h_{j,k}^{\vct{e}}(u) = 2^{jd/2} h^{\vct{e}}(2^j u - k), \quad \vct{e} \in V, \quad j \geq 1, \quad k \in \Z^d \cap 2^j Q. 
\end{equation}

The \emph{discrete} multidimensional Haar transform is derived from the continuous construction via the isometric identification between $\C^{N^d}$ and $\Sigma_N \subset L_2(Q)$: defining  
\begin{equation}
\label{bivariateH}
\mtx{h}_0(\alpha) = N^{-d/2}, \quad \quad \mtx{h}_{j,k,\vct{e}}(\alpha) = N^{-d/2}h_{j,k}^{\vct{e}}(\alpha/N), \quad \alpha \in [N]^d,
\end{equation}
the matrix product computing the discrete Haar transform can be expressed as ${\cal H}(\mtx{x}) = \big( \scalprod{ \mtx{h}_0 , \mtx{x} }, (\scalprod{\mtx{h}_{j,k,\vct{e}},\mtx{x}}) \big) $, where the indices $(j,k,e)$ are in the range \eqref{bivariateH1} but with $j \leq d-1$, which is a set of size $[N]^d - 1$.  Note that with this normalization, the transform is orthonormal.

\subsection{Gradient versus wavelet sparsity}
The following is a corollary of a remarkable result from Cohen, Dahmen, Daubechies, and DeVore \cite{cddd} which bounds the compressibility of a function's wavelet representation by its bounded variation, and will be very useful in our analysis.

 \begin{proposition}[Corollary of Theorem 1.1 from \cite{cddd}]
\label{cor:cdpx}
There is a universal constant $C > 0$ such that the following holds for any { mean-zero} $\mtx{x} \in \C^{N^d}$ in dimension $d \geq 2$: if the Haar transform coefficients $\mtx{c} = {\cal H}(\mtx{x})$ are partitioned by their support into blocks $\mtx{c}_{j,k} = ( \scalprod{ \mtx{h}_{j,k,e}, \mtx{x}} )_{e \in V}$ of cardinality $| \mtx{c}_{j,k} | = 2^d-1$, then the coefficient block of $k$th largest $\ell_2$-norm, denoted by $\mtx{c}_{(k)},$ has $\ell_2$-norm bounded by
$$
\| \mtx{c}_{(k)}\|_2 \leq C \frac{\| \mtx{x}  \|_{TV_1}}{k \cdot 2^{d/2-1}}
$$
Thus by equivalence of the anisotropic and isotropic total variation seminorms up to a factor of $\sqrt{d}$, 
$$
\| \mtx{c}_{(k)}\|_2 \leq C \frac{\sqrt{d} \| \mtx{x}  \|_{TV_2}}{k \cdot 2^{d/2-1}}.
$$
\end{proposition}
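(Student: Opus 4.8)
The plan is to transfer the statement to the continuous setting, where Theorem 1.1 of \cite{cddd} applies directly, and then to convert wavelet normalizations carefully so that the resolution parameter $N$ cancels. Throughout I assume without loss of generality that $N = 2^n$ is dyadic, so the Haar coefficients occupy scales $j = 0, 1, \dots, n-1$ with finest scale $j_{\max} = n-1 = \log_2 N - 1$. First I would invoke the isometry between $\C^{N^d}$ and $\Sigma_N^d \subset L_2([0,1)^d)$, identifying $\mtx{x}$ with the piecewise-constant function $f$ equal to $N^{d/2}x_\alpha$ on the cube indexed by $\alpha$. Since each $h_{j,k}^{\vct e}$ is constant on these cubes for $j \leq j_{\max}$, the discrete and continuous ($L_2$-normalized) coefficients coincide, $\scalprod{\mtx h_{j,k,\vct e}, \mtx x} = \scalprod{h_{j,k}^{\vct e}, f}$, and the mean-zero hypothesis removes the constant coefficient $\scalprod{\mtx h_0, \mtx x}$. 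Computing the total variation of the piecewise-constant $f$ by summing $|\text{jump}|\times(\text{face area})$ over interior faces — each jump being $N^{d/2}|x_{\alpha'}-x_\alpha|$ and each face having area $N^{-(d-1)}$ — yields the key identity $|f|_{BV} = N^{1-d/2}\|\mtx x\|_{TV_1}$.

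Next I would apply Theorem 1.1 of \cite{cddd}, which for $d \geq 2$ controls the wavelet coefficients of a $BV$ function in the scale-invariant (homogeneous, $BV$-critical) normalization: writing $\wt{\mtx c}_{j,k} \defby 2^{(1-d/2)j}\mtx c_{j,k}$ for the blocks measured in this normalization, their decreasing rearrangement in block $\ell_2$-norm obeys the weak-$\ell_1$ estimate $\|\wt{\mtx c}_{(k)}\|_2 \lesssim |f|_{BV}/k$. It is convenient that all $2^d-1$ wavelets forming a block $\mtx c_{j,k}$ share a single scale $j$, so the normalization acts as one scalar factor per block.

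The final and most delicate step is to convert back to the $L_2$-normalized (discrete Haar) blocks and to check that $N$ cancels. Because $d \geq 2$ forces $d/2-1 \geq 0$, the inflation factor in $\|\mtx c_{j,k}\|_2 = 2^{(d/2-1)j}\|\wt{\mtx c}_{j,k}\|_2$ is maximized at $j_{\max}$; applying this bound termwise and using that a uniform pointwise bound passes to decreasing rearrangements gives $\|\mtx c_{(k)}\|_2 \leq 2^{(d/2-1)j_{\max}}\|\wt{\mtx c}_{(k)}\|_2 \lesssim 2^{(d/2-1)j_{\max}}|f|_{BV}/k$. Substituting $j_{\max} = \log_2 N - 1$ gives $2^{(d/2-1)j_{\max}} = 2^{1-d/2}N^{d/2-1}$, and the factor $N^{d/2-1}$ exactly annihilates the $N^{1-d/2}$ in $|f|_{BV}$, leaving $\|\mtx c_{(k)}\|_2 \lesssim \|\mtx x\|_{TV_1}/(k\,2^{d/2-1})$, the first claim. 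This cancellation is precisely where $d \geq 2$ enters: for $d = 1$ the monotonicity in $j$ reverses and no $N$-free bound survives, consistent with the failure of the result in one dimension. The isotropic bound then follows at once from the pointwise Cauchy--Schwarz inequality $\sum_\ell |(\nabla\mtx x)_{\alpha,\ell}| \leq \sqrt d\,(\sum_\ell (\nabla\mtx x)_{\alpha,\ell}^2)^{1/2}$, i.e. $\|\mtx x\|_{TV_1} \leq \sqrt d\,\|\mtx x\|_{TV_2}$, substituted into the first estimate.

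I expect the principal obstacle to be the bookkeeping of the wavelet normalization across scales — in particular verifying that Theorem 1.1 of \cite{cddd} is invoked in exactly the block $\ell_2$ / weak-$\ell_1$ form used above, and that all coefficients genuinely live on scales $j \leq j_{\max}$, so that the maximal inflation factor is $2^{(d/2-1)j_{\max}}$ and no larger. Everything else is a change of variables plus the elementary rearrangement and Cauchy--Schwarz steps.
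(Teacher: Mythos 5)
Your proposal is correct and follows essentially the same route as the paper's own derivation: isometrically embed $\mtx{x}$ as a piecewise-constant $f \in \Sigma_N^d$, relate $|f|_{BV}$ to $N^{1-d/2}\|\mtx{x}\|_{TV_1}$ (the paper's Lemma~\ref{bvtv}), invoke Theorem 1.1 of \cite{cddd} for the Haar system, and cancel the powers of $N$ using the finest scale $j_{\max}=\log_2 N - 1$ together with the monotonicity of the scale factor for $d \geq 2$. The only difference is bookkeeping: the paper quotes \cite{cddd} with the factor $2^{j(d-2)/2}$ attached to the $L_2$-normalized blocks (its Proposition~\ref{haardecaybv}), whereas you work in the scale-invariant normalization and pass back via rearrangement domination, which yields the identical cancellation.
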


Proposition \ref{cor:cdpx}, whose derivation from Theorem 1.1 of ~\cite{cddd} is outlined in the appendix, will be crucial in our proofs of robust recovery via total variation.

\section{The main result}\label{sec:main}
Our main result concerns near-optimal recovery guarantees for multidimensional total variation minimization from compressed measurements.  Recall that a linear operator ${\cal A}: \C^{N^d} \rightarrow \C^r$ is said to have the restricted isometry property (RIP) of order $s$ and level $\delta \in (0,1)$  when
\begin{align}
\label{Operatorrip}
(1 - \delta) \| \mtx{x} \|_2^2 \leq \| {\cal{A}}( \mtx{x}) \|_2^2 \leq (1 + \delta) \| \mtx{x} \|_2^2 \hspace{12mm} \textrm{ for all}\hspace{2mm} s \textrm{-sparse } \mtx{x} \in \mathbb{C}^{N^d}.
\end{align}
A linear operator ${\cal A} = ({\bm a}_k): \C^{N^d} \rightarrow \C^r$ satisfies the RIP if and only if the $r \times N^d$ matrix $\mtx{A}$ whose $k$th row consists of the unraveled entries of the $k$th multidimensional array ${\bm a}_k$ satisfies the classical RIP, \eqref{def:rip}, and so without loss of generality we treat both definitions of the RIP as equivalent.

For our main result it will be convenient to define for a multidimensional array ${\bm a} \in \C^{N^{\ell-1}\times (N-1) \times N^{d-\ell}}$ the associated arrays  ${\bm a}_{0_{\ell}} \in \C^{N^d}$ and ${\bm a}^{0_{\ell}} \in \C^{N^d}$ obtained by concatenating a block of zeros to the beginning and end of ${\bm a}$ oriented in the $\ell$th direction:
\begin{equation}
(\vct{a}^{0_{\ell}})_{\alpha} = \left\{ \begin{array}{ll} 0, & \alpha_{\ell} = 1 \\
a_{\alpha_1,\ldots,\alpha_{\ell}-1,\ldots,\alpha_d}, & 2 \leq \alpha_{\ell} \leq N
\end{array}  \right.
\label{zeropad}
\end{equation}
and
\begin{equation}
(\vct{a}_{0_{\ell}})_{\alpha} = \left\{ \begin{array}{ll} 0, & \alpha_{\ell} = N \\
a_{\alpha_1,\ldots,\alpha_{\ell},\ldots,\alpha_d}, & 1 \leq \alpha_{\ell} \leq N - 1.
\end{array}  \right.
\label{zeropad2}
\end{equation}
The following lemma relating gradient measurements with ${\bm a}$ to signal measurements with ${\bm a}^{0_{\ell}}$ and ${\bm a}_{0_{\ell}}$ can be verified by direct algebraic manipulation and thus the proof is omitted.

\begin{lemma}
\label{padderiv}
Given $\mtx{x} \in \C^{N^d}$ and $\mtx{a} \in  \C^{N^{\ell-1}\times (N-1) \times N^{d-\ell}}$,
\begin{equation}
\label{padrelatex}
\scalprod{\mtx{a}, \mtx{x}_{r_{\ell}}} = \scalprod{\mtx{a}^{0_{\ell}}, \mtx{x}} - \scalprod{\mtx{a}_{0_{\ell}}, \mtx{x}}, \nonumber
\end{equation}
where the directional derivative ${\bm x}_{r_{\ell}}$ is defined in \eqref{eq:multidd}.  
\end{lemma}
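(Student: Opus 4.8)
The plan is to verify the identity by expanding both sides according to the definition of the inner product \eqref{inner} and reconciling them by a single index shift in the $\ell$th coordinate direction. Conceptually this is a discrete summation-by-parts (Abel summation) statement: the zero-padded arrays $\mtx{a}^{0_{\ell}}$ and $\mtx{a}_{0_{\ell}}$ together encode the adjoint of the forward-difference operator $\mtx{x} \mapsto \mtx{x}_{r_\ell}$, and the padding is exactly what absorbs the boundary terms.

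First I would write out the left-hand side. Since $\mtx{a}$ and $\mtx{x}_{r_\ell}$ are both indexed by $\alpha$ with $\alpha_\ell \leq N-1$, applying \eqref{inner} together with the definition \eqref{eq:multidd} of the directional derivative gives
\[
\scalprod{\mtx{a}, \mtx{x}_{r_\ell}} = \sum_{\alpha \,:\, \alpha_\ell \leq N-1} a_\alpha \,\overline{(\mtx{x}_{r_\ell})_\alpha} = \sum_{\alpha \,:\, \alpha_\ell \leq N-1} a_\alpha \big( \bar{x}_{(\alpha_1,\ldots,\alpha_\ell+1,\ldots,\alpha_d)} - \bar{x}_\alpha \big).
\]
The conjugation arising from the second slot of the inner product lands on $\mtx{x}$ throughout, and since the $\mtx{a}$-type arrays occupy the first slot on both sides of the claimed identity, this conjugation can be tracked without difficulty.

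Next I would expand the right-hand side over the full index set $[N]^d$. In $\scalprod{\mtx{a}^{0_{\ell}}, \mtx{x}}$, the definition \eqref{zeropad} kills the contribution at $\alpha_\ell = 1$ and sets the remaining entries to $a_{(\alpha_1,\ldots,\alpha_\ell-1,\ldots,\alpha_d)}$; after reindexing so that $a$ carries its natural index (i.e. shifting $\alpha_\ell \mapsto \alpha_\ell + 1$), this term becomes exactly $\sum_{\alpha:\alpha_\ell \leq N-1} a_\alpha \bar{x}_{(\alpha_1,\ldots,\alpha_\ell+1,\ldots,\alpha_d)}$, which is the positively-shifted contribution appearing on the left. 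Similarly, the definition \eqref{zeropad2} makes $\scalprod{\mtx{a}_{0_{\ell}}, \mtx{x}}$ equal to $\sum_{\alpha:\alpha_\ell \leq N-1} a_\alpha \bar{x}_\alpha$, the unshifted contribution. Subtracting the two recovers precisely the telescoped left-hand side.

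I do not anticipate any genuine obstacle; the only points needing care are bookkeeping ones, namely ensuring that the index shift in the first right-hand term is compatible with the zero-padding at the two boundaries $\alpha_\ell = 1$ and $\alpha_\ell = N$, and confirming that the conjugation in \eqref{inner} falls on $\mtx{x}$ identically on both sides. This is exactly why the statement can be dispatched by direct algebraic manipulation, as the authors note.
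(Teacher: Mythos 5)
Your proposal is correct and is exactly the ``direct algebraic manipulation'' that the paper invokes when it omits this proof: expanding both inner products via \eqref{inner}, using the zero-padding definitions \eqref{zeropad}--\eqref{zeropad2} to restrict the sums, and shifting the index $\alpha_\ell \mapsto \alpha_\ell + 1$ in the $\mtx{a}^{0_\ell}$ term reproduces the left-hand side term by term, with the conjugation correctly staying on the $\mtx{x}$ entries throughout. No gap; this is the intended argument.
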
 
For a linear operator ${\cal A} = ({\bm a}_k): \C^{N^{\ell-1}\times (N-1) \times N^{d-\ell}} \rightarrow \C^{m}$ we define the operators 
${\cal A}^{0_{\ell}}: \C^{N^d} \rightarrow \C^{m}$ and ${\cal A}_{0_{\ell}}: \C^{N^d} \rightarrow \C^m$ as the sequences of arrays $({\bm a}^{0_k}_k)_{k=1}^m$ and $({{\bm a}_{0_k}}_k)_{k=1}^m$, respectively.  We conclude from Lemma \ref{padderiv} that ${\cal A}(\mtx{x}_{r_{\ell}}) = {\cal A}^{0_{\ell}}(\mtx{x}) - {\cal A}_{0_{\ell}}(\mtx{x})$.  

We are now prepared to state our main result which shows that total variation minimization yields stable recovery of $N^d$-dimensional signals from RIP measurements.

\begin{mainthm}\label{thm:gen}
Let $N = 2^n$.  Fix integers $p$ and $q$.  Let ${\cal A}: \C^{N^d} \rightarrow \C^{p}$ be such that, composed with the orthonormal Haar wavelet transform, ${\cal A} {\cal H}^{*} : \C^{N^d} \rightarrow \C^{p}$ has the restricted isometry property of order $2ds$ and level $\delta < 1$. 
Let ${\cal B}_{1}, {\cal B}_{2}, \dots, {\cal B}_{d}$ with ${\cal B}_{j}: \C^{N^{d-1}(N-1)} \rightarrow \C^{q}$ be such that ${\cal B} = {\cal B}_1 \oplus_c {\cal B}_2 \oplus_c \dots \oplus_c {\cal B}_d : \C^{N^{d-1}(N-1)} \rightarrow \C^{dq}$ has the restricted isometry property of order $5ds$ and level $\delta < 1/3$.
Set $m = 2dq + p$, and consider the linear operator ${\cal M}: \C^{N^d} \rightarrow \C^{m}$ given by
\begin{equation}
\label{measure:M}
{\cal M} = {\cal A} \oplus_r  \big[ {\cal B}_1\big]^{0_1} \oplus_r \big[{\cal B}_1\big]_{0_1}  \oplus_r \dots \oplus_r  \big[ {\cal B}_{\ell}\big]^{0_{\ell}} \oplus_r \big[{\cal B}_{\ell}\big]_{0_{\ell}} \oplus_r \dots \oplus_r \big[ {\cal B}_d\big]^{0_d} \oplus_r \big[{\cal B}_d\big]_{0_d}.
\end{equation}
The following holds for any $\mtx{x} \in \C^{N^d}$. From noisy measurements $\vct{y} = {\cal M}(\mtx{x}) + \vct{\xi}$ with noise level $\| \vct{\xi} \|_2 \leq \varepsilon$, the solution to
\begin{eqnarray}
\label{tv}
\mtx{\hat{x}} = \argmin_{\mtx{z}} \| \mtx{x} \|_{TV_2}  \quad  \textrm{such that}  \quad \| {\cal M}(\mtx{z}) - \vct{y} \|_2   \leq \varepsilon
\end{eqnarray}
satisfies:

\begin{enumerate}[i)]

\item $\| \nabla(\mtx{x}-\mtx{\hat{x}}) \|_2 \lesssim  \frac{ \| \nabla\mtx{x} - (\nabla\mtx{x})_S \|_{1,2}}{\sqrt{s}} + \sqrt{d}\varepsilon,$

\item $\| \mtx{x}-\mtx{\hat{x}} \|_{TV_2} \lesssim  \| \nabla\mtx{x} - (\nabla\mtx{x})_S \|_{1,2} + \sqrt{sd} \varepsilon,$

\item $\| \mtx{x}-\mtx{\hat{x}} \|_2 \lesssim  \log(N^d)\Big( \frac{\| \nabla\mtx{x} - (\nabla\mtx{x})_S \|_{1,2}}{\sqrt{s}}+\sqrt{d} \varepsilon\Big)$,
\end{enumerate}
\vspace{3mm}
where $\| \mtx{z} \|_{TV_2} = \| \nabla \mtx{z} \|_{1,2}$ is the isotropic total variation seminorm, $\| \mtx{x} \|_{1,2} = \sum_{\alpha \in [N]^d} \big( \sum_{\ell=1}^d x_{\alpha, \ell}^2  \big)^{1/2}$ is the associated mixed $\ell_1-\ell_2$ norm, and $(\nabla\mtx{x})_S$ is the signal gradient $\nabla\mtx{x}$  restricted to the subset $S = \big\{ \{ \alpha_{(1)}, \dots, \alpha_{(s)} \}  \times [d] \big\}$ of  $s$ largest-magnitude block norms $\| (\nabla \mtx{x})_{\alpha_{(k)}} \|_2 $. 
\end{mainthm}

\noindent {\bf Remarks.}

{\bf 1.} Following the same lines of reasoning as the proof of this Main Theorem, one may derive error bounds for \emph{anisotropic} TV minimization that are a bit tighter, namely, one only requires RIP of order $5s$.  We believe that the suboptimal bounds for isotropic TV are merely an artifact of our proof technique, as isotropic TV minimization is preferred in practice. 

{\bf 2.} The third bound shows that the reconstruction error is proportional (up to a logarithmic factor) to the noise level $\varepsilon$ and the \textit{tail of the gradient} of the signal $\mtx{x}$.  A number of $m \approx sd\log(N^d)$ i.i.d. and properly normalized Gaussian measurements can be used to construct the measurement operator ${\cal M}$ which, with high probability, satisfies the required RIP conditions of the theorem \cite{badadewa08,RV08:sparse}.  From this number $m$ of measurements, the error guarantees are optimal up to the factor of $d$ required RIP measurements, factor of $\sqrt{d}$ on the noise dependence, and logarithmic factor in the signal dimension $N^d$.  We emphasize here that the specific construction of the measurement ensemble is likely only an artifact of the proof, and that more general RIP measurements are likely possible.  See also~\cite{krahmer2012beyond} for results using Fourier measurements (for $d=2$).

{\bf 3.} The main theorem recovers the total variation guarantees of~\cite{nwtv} when $d=2$ up to a $\log(1/s)$ term.  This term is lost only because in the higher-dimensional analysis, our proofs require blocking of the wavelet coefficients.  This term can be recovered by applying a more efficient blocking strategy and by writing in terms of $p$ in~\eqref{tail1} of the proof.  We write the bound as-is for simplicity.

{\bf 4.} The requirement of sidelength $N = 2^n$ is not an actual restriction, as signals with arbitrary side-length $N$ can be extended via reflections across each dimension to a signal of side-length $N = 2^n$ without increasing the total variation by more than a factor of $2^d$.  This requirement again seems to be only an artifact of the proof and one need not perform such changes in practice. 

\vspace{5mm}

We now turn to the proof of the main theorem.  We will first prove the gradient-level recovery bounds $(i)$ and $(ii)$, and then use these to prove the signal-level recovery bound $(iii)$ via Sobolev inequalities for incoherent subspaces.  Along the way, we must take care to balance estimates involving the gradient vectors $(\nabla \mtx{x}_{\alpha,\ell})_{\ell=1}^d \in \C^{d}$ and their block norms $\| \nabla \mtx{x}_{\alpha} \|_2$, as well as the blocks of wavelet coefficients ${\bf c}_{(k)} \in \C^{2^d-1}$ associated to a dyadic cube and their block norms $\| {\bf c}_{(k)} \|_2$. 

\section{Stable gradient recovery}\label{stabgrad}

In this section we prove statements $(i)$ and $(ii)$ of the main theorem concerning stable gradient recovery, using standard results in compressed sensing combined with a summation by parts  trick provided by the specific form of the measurements in the Main Theorem and Lemma \ref{padderiv}.

Recall that when a signal obeys a tube and cone constraint we can bound the norm of the entire signal, as in~\cite{crt}.  We refer the reader to Section A.1 of~\cite{nwtv} for a complete proof.

\begin{proposition}
\label{cone-tube}
Suppose that ${\cal B}$ is a linear operator satisfying the restricted isometry property of order $5ds$ and level $\delta < 1/3$, and suppose that the signal ${\bm h}$ satisfies a tube constraint
$$
\| {\cal B}({\bm h}) \|_2 \leq \sqrt{2d}\varepsilon.
$$
Suppose further that using the notation of the Main Theorem, for a subset $R = R'\times [d]$ of cardinality $|R| \leq sd$ (meaning $|R'| \leq s$), ${\bm h}$ satisfies a cone-constraint
\begin{equation}
\label{cc}
\| {\bm h}_{R^c} \|_{1,2} \leq \| {\bm h}_R \|_{1,2} + {\sigma}.
\end{equation}
Then 
\begin{equation}\label{eq:h2}
\| {\bm h} \|_2 \lesssim \frac{{\sigma}}{\sqrt{s}} + \sqrt{d}\varepsilon
\end{equation}
 and
\begin{equation}\label{eq:h1}
\| {\bm h} \|_{1,2} \lesssim \sigma + \sqrt{sd} \varepsilon.
\end{equation}
\end{proposition}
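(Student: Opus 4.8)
The plan is to adapt the Candès--Romberg--Tao cone-tube argument (as in~\cite{crt} and Section~A.1 of~\cite{nwtv}) to the block/mixed-norm setting dictated by the isotropic gradient. First I would partition the block-complement $R^c$ by a shelling procedure: sort the spatial block norms $\{\|{\bm h}_\alpha\|_2 : \alpha \in (R')^c\}$ in nonincreasing order and split $(R')^c$ into consecutive groups $R'_1, R'_2, \dots$ of $s$ indices each, setting $R_j = R'_j \times [d]$ so that each $R_j$ is supported on at most $sd$ coordinates. Since every block norm appearing in $R_j$ (for $j \geq 2$) is dominated by the average block norm over the preceding group, the standard estimate $\|{\bm h}_{R_j}\|_2 \leq s^{-1/2}\|{\bm h}_{R_{j-1}}\|_{1,2}$ holds, and summing telescopes to the tail bound $\sum_{j\geq 2}\|{\bm h}_{R_j}\|_2 \leq s^{-1/2}\|{\bm h}_{R^c}\|_{1,2}$.

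Next I would control the ``head'' $R_{01} := R \cup R_1$, which is supported on at most $2sd$ coordinates. Expanding $\|{\cal B}({\bm h}_{R_{01}})\|_2^2 = \scalprod{{\cal B}({\bm h}_{R_{01}}), {\cal B}({\bm h})} - \sum_{j\geq 2}\scalprod{{\cal B}({\bm h}_{R_{01}}), {\cal B}({\bm h}_{R_j})}$, I would bound the first term using the tube constraint and the RIP upper bound (by $\sqrt{1+\delta}\,\|{\bm h}_{R_{01}}\|_2\cdot\sqrt{2d}\varepsilon$) and each cross term by the RIP near-orthogonality inequality $|\scalprod{{\cal B}({\bm u}),{\cal B}({\bm v})}| \leq \delta\|{\bm u}\|_2\|{\bm v}\|_2$ for disjoint supports; here $|R_{01} \cup R_j| \leq 3sd$, so the assumed RIP of order $5ds$ comfortably covers every support involved. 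Combining this with the RIP lower bound $(1-\delta)\|{\bm h}_{R_{01}}\|_2^2 \leq \|{\cal B}({\bm h}_{R_{01}})\|_2^2$, dividing through by $\|{\bm h}_{R_{01}}\|_2$, and inserting the tail bound together with the cone constraint $\|{\bm h}_{R^c}\|_{1,2} \leq \|{\bm h}_R\|_{1,2} + \sigma \leq \sqrt{s}\,\|{\bm h}_{R_{01}}\|_2 + \sigma$ (the last step by Cauchy--Schwarz over the $\leq s$ blocks of $R$) yields a self-bounding inequality of the form $(1-2\delta)\|{\bm h}_{R_{01}}\|_2 \lesssim \sqrt{d}\varepsilon + \sigma/\sqrt{s}$. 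This is precisely where the hypothesis $\delta < 1/3$ enters: it keeps $1-2\delta$ bounded away from zero, giving $\|{\bm h}_{R_{01}}\|_2 \lesssim \sigma/\sqrt{s} + \sqrt{d}\varepsilon$.

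Finally I would assemble the two claimed bounds. For~\eqref{eq:h2}, I would write $\|{\bm h}\|_2 \leq \|{\bm h}_{R_{01}}\|_2 + \sum_{j\geq2}\|{\bm h}_{R_j}\|_2$ and fold the tail back in via the cone constraint to obtain $\|{\bm h}\|_2 \lesssim \|{\bm h}_{R_{01}}\|_2 + \sigma/\sqrt{s}$, which is the desired estimate. For~\eqref{eq:h1}, the cone constraint gives $\|{\bm h}\|_{1,2} = \|{\bm h}_R\|_{1,2} + \|{\bm h}_{R^c}\|_{1,2} \leq 2\|{\bm h}_R\|_{1,2} + \sigma \leq 2\sqrt{s}\,\|{\bm h}_{R_{01}}\|_2 + \sigma$, and substituting the head bound produces $\|{\bm h}\|_{1,2} \lesssim \sigma + \sqrt{sd}\,\varepsilon$. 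I expect the only real subtlety---the main obstacle---to be the block bookkeeping: ensuring Cauchy--Schwarz is applied across the $s$ spatial blocks rather than the $sd$ scalar coordinates, so that the $\sqrt{s}$ factors land correctly, and checking that the scalar RIP near-orthogonality estimate transfers verbatim to block-supported vectors, which it does since a vector with block-support $R_j$ is simply an ordinary vector supported on at most $sd$ coordinates.
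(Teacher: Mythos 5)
Your proof is correct, and it follows the same cone--tube skeleton as the paper (shell $R^c$ into equal-size groups of spatial blocks, bound the tail by the cone constraint, bound the head by the RIP, then assemble), but the head estimate is carried out by a genuinely different mechanism. The paper takes groups of $4s$ blocks and controls the head with nothing more than the triangle inequality and the RIP bounds themselves,
$$
\sqrt{2d}\varepsilon \;\geq\; \|{\cal B}({\bm h})\|_2 \;\geq\; \sqrt{1-\delta}\,\|{\bm h}_R + {\bm h}_{R_1}\|_2 \;-\; \sqrt{1+\delta}\sum_{j \geq 2}\|{\bm h}_{R_j}\|_2,
$$
where the group size $4s$ is tuned so that the tail coefficient becomes $\frac{1}{\sqrt{4s}}\cdot\sqrt{s} = \frac12$ and $\sqrt{1-\delta}-\frac12\sqrt{1+\delta}$ stays positive for $\delta<1/3$; it is the resulting head support $|R \cup R_1| \leq sd + 4sd = 5sd$ that consumes the order-$5ds$ hypothesis. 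You instead take groups of $s$ blocks and control the head via the inner-product expansion together with the restricted-orthogonality inequality $|\scalprod{{\cal B}({\bm u}),{\cal B}({\bm v})}| \leq \delta\|{\bm u}\|_2\|{\bm v}\|_2$ for disjointly supported sparse vectors. This buys a sharper accounting: your argument only ever touches supports of size at most $3sd$, so it establishes the proposition assuming RIP of order $3ds$ (and any $\delta < 1/2$), a slight weakening of the hypotheses. The cost is that restricted orthogonality is an extra lemma not contained in the RIP definition used by the paper---it must be derived from the RIP via the parallelogram identity, and for complex-valued signals one should reduce to the real part by a phase rotation (or accept a slightly worse constant)---whereas the paper's route is self-contained. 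Your block bookkeeping is handled correctly: Cauchy--Schwarz is applied across the $\leq s$ spatial blocks so the $\sqrt{s}$ factors land where they should, and the RIP estimates indeed see only the cardinality $\leq sd$ of the scalar support, not the block structure.
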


Proposition~\ref{cone-tube} generalizes results in~\cite{CRT06:Stable} and its proof is included in the appendix.  Using Proposition~\ref{cone-tube} and RIP assumptions on the operator ${\cal B}$, the gradient-level recovery guarantees (i) and (ii) reduce to proving that the discrete gradient of the residual signal error satisfies the tube and cone constraints.

\begin{proof}(Main Theorem, statements (i) and (ii).) \\
Let  ${\bm v} = \mtx{x} - \mtx{\hat{x}}$ be the residual error, and set $\mtx{h} = \nabla \mtx{v} = \nabla \mtx{x} - \nabla \mtx{\hat{x}} \in \C^{N^d \times d}$. \\
Then we have
\begin{description}
\item[]{\bfseries Cone Constraint.}  Consider the block norm $\| ( \nabla {\bf x} )_\alpha \|_2 = \big( \sum_{\ell=1}^d x_{\alpha,\ell}^2 \big)^{1/2}$ associated to the index $\alpha \in [N]^d$, and denote by $\alpha_{(j)}$ the index of the $j$th largest block norm $\| ( \nabla {\bf x} )_{\alpha_{(j)}} \|_2$, and let $S = \{ \alpha_{(1)}, \dots, \alpha_{(s)} \}  \times [d] $.   Since $\mtx{\hat{x}} = \mtx{x} - {\bm v}$ is a minimizer of (TV) and $\mtx{x}$ satisfies the feasibility constraint in (TV), we have that $\| \nabla \mtx{\hat{x}} \|_{1,2} \leq \| \nabla \mtx{x} \|_{1,2}$.  By the reverse triangle inequality,
\begin{align*}
\| (\nabla\mtx{x})_S \|_{1,2} - \| {\bm h}_S\|_{1,2} - \| (\nabla\mtx{x})_{S^c}\|_{1,2} &+ \| {\bm h}_{S^c}\|_{1,2} \\
&\leq \| (\nabla\mtx{x})_S - {\bm h}_S\|_{1,2} + \| (\nabla\mtx{x})_{S^c} - {\bm h}_{S^c}\|_{1,2}\\
&= \|\nabla\mtx{\hat{x}}\|_{1,2}\\
&\leq \|\nabla\mtx{x}\|_{1,2}\\
&= \| (\nabla\mtx{x})_S\|_{1,2} + \| (\nabla\mtx{x})_{S^c}\|_{1,2}.
\end{align*}
This yields the cone constraint

$$
\| {\bm h}_{S^c} \|_{1,2} \leq \| {\bm h}_S \|_{1,2} + 2\| (\nabla\mtx{x})_{S^c} \|_{1,2}
$$
\item[] {\bfseries Tube constraint.} Recall that $\mtx{v} = \mtx{x} - \mtx{\hat{x}}$. Since both $\mtx{x}$ and $\mtx{\hat{x}}$ are feasible solutions to (TV), Jensen's inequality gives 
\begin{equation}
\| {\cal M}({\bm v}) \|_2^2 \leq 2\| {\cal M}(\mtx{x}) - \vct{y} \|_2^2 + 2\| {\cal M}(\mtx{\hat{x}}) - \vct{y} \|_2^2\leq 4 \varepsilon^2 \nonumber
\end{equation}

By Lemma \ref{padderiv}, we have for each component operator ${\cal B}_j$,
\begin{eqnarray}
{\cal B}_j({\bm v}_{r_j}) &=& [{\cal B}_j]^{0_j} ({\bm v}) - [{\cal B}_j]_{0_j} ({\bm v}) 
\end{eqnarray}
Then ${\cal B}(\nabla\mtx{v}) = \sum_{j=1}^d {\cal B}_j({\bm v}_{r_j}),$ (where we assume that $\nabla \mtx{v}$ is ordered appropriately) and
\begin{eqnarray}
\| {\cal B}(\nabla\mtx{v}) \|_2^2 &=&\|\sum_{j=1}^d {\cal B}_j({\bm v}_{r_j}) \|_2^2 \nonumber \\
&\leq& d \sum_{j=1}^d \| {\cal B}_j({\bm v}_{r_j}) \|_2^2  \nonumber \\
&\leq& 2d \sum_{j=1}^d \Big( \| {\cal B}_j]^{0_j} ({\bm v}) \|_2^2 + \| {\cal B}_j]_{0_j} ({\bm v}) \|_2^2 \Big)  \nonumber \\
 &\leq& 2d\| {\cal M}({\bm v}) \|_2^2 
 \nonumber \\
 &\leq& 8d\varepsilon^2.
\end{eqnarray}
\end{description}

In light of Proposition~\ref{cone-tube} the proof is complete.

\end{proof}

\begin{remark}
The component operator ${\cal A}$ from the main theorem was not used at all in deriving properties $(i)$ and $(ii)$; on the other hand, \emph{only} the measurements in ${\cal A}$ will be used to derive property $(iii)$ from $(i)$ and $(ii)$.  We conjecture that all measurements in the main result apart from those in the component operator ${\cal A}$ are artifacts of the proof techniques herein.
\end{remark}

\section{A Sobolev inequality for incoherent subspaces}\label{sec:sob}

We now derive a strengthened Sobolev inequality for signals lying near the null space of a matrix which is incoherent to the Haar wavelet basis.

\begin{theorem}[Sobolev inequality for incoherent subspaces]
\label{strongsobo}
Let $d \geq 2$ and let $N = 2^n$.  Let ${\cal A}: \C^{N^d} \rightarrow \C^m$ be a linear map such that, composed with the multivariate Haar wavelet transform ${\cal H}: \C^{N^d} \rightarrow \C^{N^d}$, the resulting operator ${\cal A} {\cal H}^{*}: \C^{N^d} \rightarrow \C^{m}$ satisfies the restricted isometry property of order $2s$ and level $\delta < 1$.  Then there is a universal constant $C > 0$ such that the following holds: if $\mtx{v} \in \C^{N^d}$ satisfies the tube constraint $\| {\cal A}(\mtx{v}) \|_2 \leq \varepsilon$, then
\begin{equation}
\label{sobstrong1}
\| \mtx{v} \|_2 \leq C\frac{\| \mtx{v} \|_{TV_1}}{\sqrt{s}} \log(N^d) + \varepsilon. 
\end{equation}
and so, by equivalence of the isotropic and anisotropic total variation seminorms up to a factor of $\sqrt{d}$, 
\begin{equation}
\label{sobstrong12}
\| \mtx{v} \|_2 \leq C\frac{\| \mtx{v} \|_{TV_2}}{\sqrt{s/d}} \log(N^d) + \varepsilon. 
\end{equation}
\end{theorem}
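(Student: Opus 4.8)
The plan is to pass to the orthonormal Haar basis, where the tube constraint becomes a statement about the Haar coefficients and where the total variation controls their decay via Proposition~\ref{cor:cdpx}. Write $\Phi = {\cal A}{\cal H}^{*}$ for the map that has RIP of order $2s$, and let $\mtx{c} = {\cal H}(\mtx{v})$ be the Haar coefficients of $\mtx{v}$, so that ${\cal A}(\mtx{v}) = \Phi\mtx{c}$, the tube constraint reads $\| \Phi\mtx{c} \|_2 \leq \varepsilon$, and orthonormality of ${\cal H}$ gives $\| \mtx{v} \|_2 = \| \mtx{c} \|_2$. Thus it suffices to bound $\| \mtx{c} \|_2$. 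Since the detail coefficients of $\mtx{v}$ coincide with those of its mean-zero part and $\| \cdot \|_{TV_1}$ is insensitive to additive constants, I may apply Proposition~\ref{cor:cdpx} to the detail blocks: ordering the coefficient blocks $\mtx{c}_{(1)}, \mtx{c}_{(2)}, \dots$ of cardinality $2^d-1$ by decreasing $\ell_2$-norm, one has $\| \mtx{c}_{(k)} \|_2 \lesssim \| \mtx{v} \|_{TV_1}/(k\, 2^{d/2-1})$.

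Next I would partition the ordered blocks into consecutive groups $\Lambda_0, \Lambda_1, \Lambda_2, \dots$, each consisting of $s' \approx s/(2^d-1)$ blocks, so that every group contains at most $s$ scalar coefficients, and I fold the single constant (mean) Haar coefficient into the head group $\Lambda_0$. Because $\Lambda_0 \cup \Lambda_j$ carries at most $2s$ nonzeros, the RIP of order $2s$ together with its near-orthogonality consequence $| \scalprod{ \Phi\mtx{c}_{\Lambda_0}, \Phi\mtx{c}_{\Lambda_j} } | \leq \delta \| \mtx{c}_{\Lambda_0} \|_2 \| \mtx{c}_{\Lambda_j} \|_2$ lets me estimate the head group. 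Writing $\Phi\mtx{c}_{\Lambda_0} = \Phi\mtx{c} - \sum_{j \geq 1}\Phi\mtx{c}_{\Lambda_j}$, expanding $\| \mtx{c}_{\Lambda_0} \|_2^2 \leq (1-\delta)^{-1}\scalprod{ \Phi\mtx{c}_{\Lambda_0}, \Phi\mtx{c}_{\Lambda_0} }$, and applying the tube constraint to the term $\scalprod{ \Phi\mtx{c}_{\Lambda_0}, \Phi\mtx{c} }$, I obtain $\| \mtx{c}_{\Lambda_0} \|_2 \lesssim \varepsilon + \sum_{j \geq 1}\| \mtx{c}_{\Lambda_j} \|_2$. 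Combined with $\| \mtx{c} \|_2 \leq \| \mtx{c}_{\Lambda_0} \|_2 + \sum_{j \geq 1}\| \mtx{c}_{\Lambda_j} \|_2$, the whole problem reduces to bounding the tail sum $\sum_{j \geq 1}\| \mtx{c}_{\Lambda_j} \|_2$.

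For the tail I would feed the decay estimate into each group. Since $\Lambda_j$ collects blocks of rank between $js'$ and $(j+1)s'$, the crude bound $\sum_{k=js'+1}^{(j+1)s'}k^{-2} \leq s'/(js')^2 = 1/(j^2 s')$ gives $\| \mtx{c}_{\Lambda_j} \|_2 \lesssim \| \mtx{v} \|_{TV_1}/(2^{d/2-1}\, j\, \sqrt{s'})$. This is the crux of the dimensional bookkeeping: substituting $s' \approx s/(2^d-1)$ makes the factor $2^{d/2-1}\sqrt{s'}$ collapse to a constant multiple of $\sqrt{s}$, so the exponential-in-$d$ constants coming from the block cardinality in Proposition~\ref{cor:cdpx} cancel exactly against those from the grouping, leaving $\| \mtx{c}_{\Lambda_j} \|_2 \lesssim \| \mtx{v} \|_{TV_1}/(j\sqrt{s})$. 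Summing the resulting harmonic series over the $J \approx (N^d-1)/s$ groups produces the logarithmic factor, $\sum_{j=1}^{J}1/j \lesssim \log J \leq \log(N^d)$, whence $\sum_{j \geq 1}\| \mtx{c}_{\Lambda_j} \|_2 \lesssim \| \mtx{v} \|_{TV_1}\log(N^d)/\sqrt{s}$. Collecting the estimates yields \eqref{sobstrong1}, and \eqref{sobstrong12} follows at once from the norm equivalence $\| \mtx{v} \|_{TV_1} \leq \sqrt{d}\,\| \mtx{v} \|_{TV_2}$.

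I expect the main obstacle to be precisely the bookkeeping just described: reconciling the three competing ``sizes'' — the $2^d-1$ coefficients per block, the RIP order $2s$, and the per-group block count $s'$ — so that the $d$-dependent constants cancel rather than accumulate, and confirming that the RIP order ceiling forces \emph{equal-size} groups (dyadic groups would be summable without a log, but would demand RIP of order comparable to $N^d$), which is what pins the logarithmic loss to $\log(N^d/s)$. A secondary point requiring care is the constant Haar coefficient: Proposition~\ref{cor:cdpx} governs only the detail blocks, so the mean must be absorbed into $\Lambda_0$ and controlled purely through the RIP/tube step rather than through the decay estimate.
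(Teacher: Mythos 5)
Your proposal is correct: every step goes through as you describe, and the $d$-dependent constants cancel exactly as you predict. The only details left to pin down are the degenerate case $s < 2^d-1$ (where each ``group'' is a single block and the factor $2^{d/2-1} \gtrsim \sqrt{s}$ does the work of $\sqrt{s'}$) and the off-by-one in $|\Lambda_0|$ caused by folding in the mean coefficient (fixed by choosing $s'$ so that $s'(2^d-1)+1 \leq s$); the paper's own proof must handle the same degeneracy, which it does via its parameter $p \in (0,1)$. The paper follows the same skeleton --- pass to the Haar domain, control the tail by Proposition~\ref{cor:cdpx}, control the head by RIP, extract the logarithm from a harmonic sum --- but differs from yours in two tactical ways. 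First, its decomposition is at the scalar level: $S_0$ holds the mean coefficient plus the $s-1$ largest-magnitude entries of $\mtx{c}$, $S_1$ the next $s$ largest, and so on, with the block structure invoked only inside the tail estimates. Second, its head estimate avoids the restricted-orthogonality (polarization) lemma you invoke: it uses the plain triangle inequality $\varepsilon \geq \|\Phi(\mtx{c}_{S_0}+\mtx{c}_{S_1})\|_2 - \sum_{k\geq 2}\|\Phi(\mtx{c}_{S_k})\|_2$ together with the shelling bound $\sum_{k \geq 2}\|\mtx{c}_{S_k}\|_2 \leq s^{-1/2}\|\mtx{c}_{S_0^c}\|_1$, so the logarithm enters through an $\ell_1$ tail bound $\|\mtx{c}_{S_0^c}\|_1 \lesssim \|\mtx{v}\|_{TV_1}\log(N^d)$ (which requires an $\ell_2$-to-$\ell_1$ conversion on blocks, costing a factor $(2^d-1)^{1/2}$ that cancels against the $2^{d/2-1}$ in Proposition~\ref{cor:cdpx}) rather than through a direct sum of group $\ell_2$ norms as in your argument. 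The two routes cost about the same; what yours buys is that the block bookkeeping stays native throughout, and that the group count $J \approx N^d/s$ automatically delivers the sharper factor $\log(N^d/s)$ --- precisely the refinement that Remark 3 after the Main Theorem says can be recovered by ``a more efficient blocking strategy.'' What the paper's version buys is that it needs only the RIP inequality itself, applied once, plus a single magnitude ordering of scalar coefficients.
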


\begin{remark}
The RIP assumptions on ${\cal A}{\cal H}^* = {\cal A}{\cal H}^{-1}$ imply that for $\mtx{v}$ with $s$-sparse wavelet representation, 
$$ 
\| {\cal A}(\vct{v}) \|_2 =  \| {\cal A}{\cal H}^*{\cal H}(\vct{v}) \|_2 \approx (1\pm \delta) \| {\cal H} (\vct{v}) \|_2 \approx (1\pm \delta) \| \vct{v} \|_2,$$
with the final equality holding because ${\cal H}$ is unitary.  This implies that the null space of ${\cal A}$ cannot contain any signals admitting an $s$-sparse wavelet expansion, apart from the zero vector.  In particular, the null space of ${\cal A}$ contains no nontrivial constant signals and thus intersects the null space of $\nabla$ trivially.
\end{remark}

Theorem \ref{strongsobo} admits corollaries for various families of random matrices with restricted isometries.  For Gaussian random matrices, the theorem implies the following.  
\begin{cor}
\label{sscor}
Let ${\cal A}: \C^{N^d} \rightarrow \C^m$ be a linear map realizable as an $N^d \times m$ matrix whose entries are mean-zero i.i.d. Gaussian random variables.  Then there is a universal constant $c > 0$ such that with probability exceeding $1 - e^{-cm}$, the following bound holds for any $\mtx{x} \in \C^{N^d}$ lying in the null-space of ${\cal A}$: 
\begin{equation}
\| \mtx{x} \|_2 \lesssim \frac{\| \mtx{x} \|_{TV_1}}{\sqrt{m}} [\log(N^d)]^2.
\end{equation}
and
\begin{equation}
\| \mtx{x} \|_2 \lesssim \frac{\sqrt{d} \| \mtx{x} \|_{TV_2}}{\sqrt{m}}[\log(N^d)]^2.
\end{equation}

\end{cor}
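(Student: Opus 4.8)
The plan is to read the corollary directly off Theorem~\ref{strongsobo} by exhibiting a Gaussian matrix that meets its hypotheses and then optimizing the sparsity level $s$ against the number of measurements $m$. The only two facts about Gaussian matrices I need are their unitary invariance and the sharp RIP scaling $m \gtrsim s\log(N^d/s)$; all of the analytic content already lives in the theorem, so the corollary is essentially a plug-in.

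First I would dispense with normalization. Since the claimed bound concerns only $\mtx{x} \in \ker({\cal A})$, and the null space is unchanged under scaling the entries, I may assume without loss of generality that the entries of ${\cal A}$ have variance $1/m$, the normalization under which Gaussian matrices are known to satisfy the RIP. Next I would invoke unitary invariance: because ${\cal H}$ is orthonormal (so ${\cal H}^*$ is unitary) and the i.i.d. Gaussian distribution is rotation-invariant, the composed operator ${\cal A}{\cal H}^*$ has exactly the same distribution as ${\cal A}$, i.e.\ it is again an i.i.d.\ Gaussian matrix with $m$ rows. Hence the RIP hypothesis of Theorem~\ref{strongsobo} can be verified on ${\cal A}{\cal H}^*$ using the standard Gaussian RIP guarantee cited in the introduction~\cite{badadewa08,RV08:sparse}.

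I would then choose the sparsity level. Fix $\delta = 1/2$ and set $s = \lfloor c'\, m/\log(N^d)\rfloor$ for a small absolute constant $c' > 0$. Since $\log(N^d/2s) \leq \log(N^d)$, this choice guarantees $m \gtrsim \delta^{-2}(2s)\log(N^d/2s)$, so with probability at least $1 - e^{-cm}$ the operator ${\cal A}{\cal H}^*$ has the RIP of order $2s$ and level $\delta < 1$ required by Theorem~\ref{strongsobo}. On this event, any $\mtx{x} \in \ker({\cal A})$ satisfies the tube constraint $\| {\cal A}(\mtx{x}) \|_2 \leq \varepsilon$ with $\varepsilon = 0$, so \eqref{sobstrong1} yields
\[
\| \mtx{x} \|_2 \;\leq\; C\,\frac{\| \mtx{x} \|_{TV_1}}{\sqrt{s}}\,\log(N^d).
\]
Substituting $s \approx m/\log(N^d)$ gives $\| \mtx{x} \|_2 \lesssim \frac{\| \mtx{x} \|_{TV_1}}{\sqrt{m}}[\log(N^d)]^{3/2} \leq \frac{\| \mtx{x} \|_{TV_1}}{\sqrt{m}}[\log(N^d)]^2$, which is the first bound. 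The isotropic bound then follows immediately from the anisotropic one via the equivalence $\| \mtx{x} \|_{TV_1} \leq \sqrt{d}\,\| \mtx{x} \|_{TV_2}$, exactly as in the passage from \eqref{sobstrong1} to \eqref{sobstrong12}.

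There is no substantive obstacle here; the only points requiring care are bookkeeping ones. I would need to confirm that ${\cal A}{\cal H}^*$ inherits the Gaussian RIP through rotation invariance, to observe that null-space membership forces the tube parameter $\varepsilon$ to vanish so that no noise term survives, and to track the logarithmic factors through the optimal choice $s \approx m/\log(N^d)$. The exponent $3/2$ actually produced by this computation is recorded as the looser $2$ in the statement purely for simplicity; the same argument delivers the sharper exponent if desired.
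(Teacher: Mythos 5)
Your proof is correct and follows essentially the same route as the paper's: verify the Gaussian RIP at sparsity level $s \approx m/\log(N^d)$ and substitute into Theorem~\ref{strongsobo} with tube parameter $\varepsilon = 0$ for null-space vectors. You are in fact slightly more careful than the paper's two-line proof, which asserts the RIP for ${\cal A}$ itself rather than for the composition ${\cal A}{\cal H}^*$ required by the theorem --- your explicit appeal to rotation invariance of the Gaussian ensemble closes that small gap --- and your observation that the computation actually yields the sharper exponent $3/2$, recorded in the statement as the looser $2$, matches what the paper's own substitution gives.
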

{ 
\begin{proof}[Proof of Corollary \ref{sscor}]
From results on Gaussian matrices and the restricted isometry property (see e.g.~\cite{CT05:Decoding,MPJ06:Uniform,badadewa08,RV08:sparse}), ${\cal A}$ satisfies the RIP of order $2s$ and level $\delta < 1$ with probability exceeding $1 - e^{-cm}$ when $s$ is proportional to $m / \log(N^d)$.  Substituting this value for $s$ into~\eqref{sobstrong1} and~\eqref{sobstrong12} yields the claim.
\end{proof}

\begin{proof}[Proof of Theorem~\ref{strongsobo}]
Consider the signal error $\mtx{v} = \mtx{x} - \mtx{\hat{x}}$, and consider its orthogonal decomposition $\mtx{v} = \mtx{v^0} + \mtx{v^1}$ where $\mtx{v^0} = \scalprod{ \mtx{h}_0, \mtx{v} }\mtx{h}_0$ is constant and $\mtx{v^1} = \mtx{v} - \mtx{v^0}$ is mean-zero (recall that $\mtx{h}_0$ has constant entries equal to $N^{-d/2}$).
Let $\mtx{c} = {\cal H}({\mtx{v}}) \in \C^{N^d}$ represent the orthonormal Haar transform of $\mtx{v}$.  Suppose without loss of generality that the desired sparsity level $s$ is either smaller than $2^d-1$ or a positive multiple of $2^d-1$, and write $s = p(2^d-1)$ where either $p \in \mathbb{N}$ or $p \in (0,1)$ (for arbitrary $s \in \mathbb{N}$, we could consider $s' = \lceil{ s/(2^d-1) \rceil}$ which satisfies $s' \leq 2s$). 
 
 Let $S = S_0 \subset [N]^d$ be the index set of cardinality $s$ which includes the constant Haar coefficient $c_0 = \scalprod{ \mtx{h}_0, \mtx{v} }$ along with the $s-1$ largest-magnitude entries of $\mtx{c}$ (not including $c_0$).   Let $S_1$ be the set of $s$ largest-magnitude entries of $\mtx{c}$ in  $[N]^d \setminus S_0$, and so on.  Note that $\mtx{c}_{S}$ and similar expressions above and below can have both the meaning of restricting $\mtx{c}$ to the indices in $S$ as well as being the array whose entries are set to zero outside $S$.  

Now, since $\mtx{v^1}$ is mean-zero and $\mtx{c}_{S_0^c} = {\cal H}({\mtx{v^1}})_{S_0^c}$, we may apply Proposition \ref{cor:cdpx}.  To that end, consider the decomposition of $\mtx{c}$ into blocks $\mtx{c}_{(k)}$ of cardinality $2^d-1$ as in Proposition \ref{cor:cdpx}, according to the support of their wavelets.  By definition, $\| \mtx{c}_{S_0} \|_1$ is at least as large as $\| \mtx{c}_{\Omega} \|_1$ for any other $\Omega \subset [N]^d$ of cardinality $s-1$.  Consequently, $\| \mtx{c}_{S_0^c} \|_1$ is \emph{smaller} than $\| \mtx{c}_{\Omega^c} \|_1$ for any other $\Omega \subset [N]^d$ of cardinality $s-1$.  Thus, because $s = p(2^d-1)$, 
\begin{eqnarray}
\label{tail1}
\| \mtx{c}_{S_0^c} \|_1 
&\leq& \sum_{j \geq p+1}  \|\mtx{c}_{(j)}\|_1 \nonumber \\
&\leq& (2^{d}-1)^{1/2} \sum_{j \geq p+1}  \|\mtx{c}_{(j)} \|_2 \nonumber \\
&\lesssim& \| \mtx{v} \|_{TV_1}  \sum_{\ell = p+1}^{N^d}  \frac{1}{\ell}  \nonumber \\
&\lesssim& \| \mtx{v}  \|_{TV_1} \log(N^d),
\end{eqnarray}
where the second to last inequality follows from Proposition \ref{cor:cdpx} and the last inequality from properties of the geometric summation.

We use a similar procedure to bound the $\ell_2$-norm of the residual,
\begin{eqnarray}
\label{tail2}
\| \mtx{c}_{S_0^c} \|_2^2 &\lesssim& \sum_{j\geq p+1} \|\mtx{c}_{(j)} \|_2^2 \nonumber \\
 &\lesssim& \frac{\|{\bm v} \|_{TV_1}^2}{2^d} \sum_{\ell = p+1}^{N^d} \frac{1}{\ell^2} \nonumber \\
 &\lesssim& \frac{(\| {\bm v} \|_{TV_1})^2}{2^d \max{(1,p)}}\nonumber \\
 &\lesssim& \frac{(\| {\bm v} \|_{TV_1})^2}{s}.
\end{eqnarray}
Then, $\| \mtx{c}_{S_0^c} \|_2 \lesssim\| {\bm v} \|_{TV_1} / \sqrt{s}$.

By assumption, ${\bm v}$ satisfies the tube constraint $\| {\cal A}({\bm v}) \|_2 \leq \varepsilon$ and ${\cal A} {\cal H}^{*} = {\cal A} {\cal H}^{-1}$ satisfies the restricted isometry property of order $5ds$.  We conclude that
\begin{eqnarray}
\varepsilon &\geq& \| {\cal A}({\bm v}) \|_2 = \| {\cal A}{\cal H}^{*}(\mtx{c}) \|_2 \nonumber \\
&\geq& \| {\cal A} {\cal H}^{*} (\mtx{c}_{S_0} + \mtx{c}_{S_1}) \|_2 - \sum_{k=2}^r  \| {\cal A}{\cal H}^{*} (\mtx{c}_{S_k}) \|_2 \nonumber \\
&\geq& (1 - \delta) \| \mtx{c}_{S_0} + \mtx{c}_{S_1} \|_2 -(1 + \delta)  \sum_{k=2}^r \| \mtx{c}_{S_k} \|_2 \nonumber \\
&\geq&  (1- \delta) \| \mtx{c}_{S_0} \|_2 - (1 + \delta) \frac{1}{\sqrt{s}} \| \mtx{c}_{S_0^c} \|_1,
\end{eqnarray}
the last inequality holding because the magnitude of each entry in the array $\mtx{c}_{S_k}$ is smaller than the average magnitude of the entries in the array $\mtx{c}_{S_{k-1}}$.
Along with the tail bound \eqref{tail1}, we can then conclude that, up to a constant in the restricted isometry level $\delta$,  
\begin{eqnarray}
\| \mtx{c}_{S_0} \|_2 &\lesssim& \varepsilon + \log(N^d)\Big( \frac{\| {\bm v} \|_{TV_1}}{\sqrt{s}}\Big).
\end{eqnarray}
Combining this bound with the $\ell_2$-tail bound \eqref{tail2} and recalling that the Haar transform is unitary, we find that
\begin{equation}
\| {\bm v} \|_2 = \| {\cal H}^{*} \mtx{c} \|_2 =  \| \mtx{c} \|_2  \leq \| \mtx{c}_{S_0} \|_2 + \| \mtx{c}_{S_0^c} \|_2  \lesssim \varepsilon + \log(N^d)\Big( \frac{\| {\bm v} \|_{TV_1}}{\sqrt{s}}\Big),
\end{equation}
which completes the proof.

\end{proof}
}

\subsection{Proof of the Main Theorem}

Because we proved the bounds (i) and (ii) from the main theorem concerning gradient-level recovery bounds in Section~\ref{stabgrad}, it remains only to prove the signal recovery error bound (iii).

By feasibility of both $\mtx{x}$ and $\mtx{\hat{x}}$ for the constraint in the total variation minimization program, the signal error ${\bm v} = \mtx{x} - \mtx{\hat{x}}$ obeys the tube-constraint $\| {\cal A}({\bm v}) \|_2 \leq 2\varepsilon$.  Applying Theorem \ref{strongsobo} with RIP of order $2sd$ (in place of $2s$) and the total variation bound (ii) yields
\begin{align*}
\| \mtx{x}-\mtx{\hat{x}} \|_2 &= \| {\bm v} \|_2 \\
 &\lesssim \varepsilon + \log(N^d)\Big(\frac{\| {\bm v} \|_{TV_1}}{\sqrt{ds}}\Big)\\
  &\lesssim \varepsilon + \log(N^d)\Big(\frac{\| {\bm v} \|_{TV_2}}{\sqrt{s}}\Big)\\
 &\lesssim \varepsilon + \frac{ \log(N^d)}{\sqrt{s}} \left( \| \nabla\mtx{x} - (\nabla\mtx{x})_S \|_{1,2} + \sqrt{sd} \varepsilon \right) \\
 &\lesssim \log(N^d)\left(\sqrt{d}\varepsilon + \frac{\| \nabla\mtx{x} - (\nabla\mtx{x})_S \|_{1,2}}{\sqrt{s}}\right).
\end{align*}
 The proof completes. 
   
\appendix

\subsection{Derivation of Proposition \ref{cor:cdpx}}
Recall that the space $L_p(\Omega)$ $(1 \leq p < \infty)$ for $\Omega  \subset \R^d$ consists of all functions $f$ satisfying
$$
\| f \|_{L_p(\Omega)} = \Big( \int_{\Omega} | f(u) |^p du \Big)^{1/p} < \infty.
$$

The space BV$(\Omega)$ of functions of bounded variation over the unit cube $Q = [0,1)^d$ is often used as a continuous model for natural images.  Recall that a function $f \in L_1(Q)$ has finite bounded variation if and only if its distributional gradient is a bounded Radon measure, and this  measure generates the BV seminorm $| f |_{BV(\Omega)}$.   More precisely,  
 \begin{definition}
For a vector $\vct{v} \in \mathbb{R}^d$, we define the difference operator $\Delta_{\vct{v}}$ in the direction of $\vct{v}$ by 
$$
\Delta_{\vct{v}}(f,\mtx{x}) := f(\mtx{x} + \vct{v}) - f(\mtx{x}).
$$ 
We say that a function $f \in L_1(Q)$ is in $BV(Q)$ if and only if
$$
V_Q(f) \defby \sup_{h > 0} h^{-1} \sum_{j=1}^d \| \Delta_{h \vct{e}_j}(f,\cdot) \|_{L_1(Q(h\vct{e}_j))} =\lim_{h \rightarrow 0}h^{-1} \sum_{j=1}^d  \| \Delta_{h \vct{e}_j} (f, \cdot) \|_{L_1(Q(h \vct{e}_j))} < \infty
$$
where $\vct{e}_j$ denotes the $j$th coordinate vector.  The function $V_Q(f)$ provides a seminorm for $BV(Q)$:
$$
| f |_{BV(Q)} \defby V_Q(f).
$$
\end{definition}
In particular, piecewise constant functions are in the space $BV(Q)$, and we have
\begin{lemma}\label{bvtv}
Let $N = 2^n$.  Let $\mtx{x} \in \C^{N^d}$ and let $f \in \Sigma^d_N$ be its isometric embedding as a piecewise constant function.   Then $| f |_{BV} \leq N^{-d/2 + 1} \| \mtx{x} \|_{TV_1}$.
\end{lemma}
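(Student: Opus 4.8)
The plan is to compute the limit in the definition of $V_Q(f)$ directly, exploiting that $f$ is piecewise constant on the dyadic grid of side length $1/N$. First I would recall that under the isometric embedding $f$ takes the constant value $N^{d/2}x_\alpha$ on the cube $C_\alpha = \prod_{i=1}^d\big[\tfrac{\alpha_i-1}{N},\tfrac{\alpha_i}{N}\big)$, and write the spatial variable as $u$ (to avoid clashing with the array $\mtx{x}$), so that $\Delta_{h\vct{e}_j}(f,u) = f(u+h\vct{e}_j)-f(u)$ and $Q(h\vct{e}_j)=\{u\in Q: u+h\vct{e}_j\in Q\}$. Fixing a direction $j$ and a shift $0<h<1/N$, the increment $\Delta_{h\vct{e}_j}(f,u)$ vanishes unless $u$ and $u+h\vct{e}_j$ lie in distinct cubes, which — since $h<1/N$ — happens exactly when the segment from $u$ to $u+h\vct{e}_j$ crosses one of the interior hyperplanes $\{u_j=k/N\}$ with $1\leq k\leq N-1$. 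For fixed $k$ this crossing set is the thin slab $\{u: u_j\in[\tfrac{k}{N}-h,\tfrac{k}{N})\}$, on which $u$ sits in a cube with $j$th index $k$ and $u+h\vct{e}_j$ in the neighbouring cube with $j$th index $k+1$; these slabs are disjoint across distinct $k$ and all lie inside $Q(h\vct{e}_j)$.

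On such a slab, with transverse cube index $\beta\in[N]^{d-1}$, the increment equals $N^{d/2}\big(x_{(\dots,k+1,\dots)}-x_{(\dots,k,\dots)}\big)=N^{d/2}(\nabla\mtx{x})_{\alpha,j}$, where $\alpha$ is the index with $j$th coordinate $k$ and transverse part $\beta$. I would then integrate: the slab has extent $h$ in $u_j$ and each transverse cube contributes volume $N^{-(d-1)}$, so
\begin{equation*}
\|\Delta_{h\vct{e}_j}(f,\cdot)\|_{L_1(Q(h\vct{e}_j))} = h\,N^{-(d-1)}N^{d/2}\!\!\sum_{\alpha:\,\alpha_j\leq N-1}\!\!|(\nabla\mtx{x})_{\alpha,j}| = h\,N^{-d/2+1}\sum_{\alpha}|(\nabla\mtx{x})_{\alpha,j}|.
\end{equation*}
Summing over $j$, dividing by $h$, and recalling $\|\mtx{x}\|_{TV_1}=\sum_{j,\alpha}|(\nabla\mtx{x})_{\alpha,j}|$, the ratio equals $N^{-d/2+1}\|\mtx{x}\|_{TV_1}$ for every $h<1/N$; letting $h\to 0$ gives $|f|_{BV}=N^{-d/2+1}\|\mtx{x}\|_{TV_1}$, which in particular yields the claimed inequality (in fact with equality).

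The computation is essentially bookkeeping, so the only point requiring care is the slab analysis: I must confirm that for $h<1/N$ at most one grid hyperplane is crossed, that the crossing slabs for different $k$ do not overlap and remain within $Q(h\vct{e}_j)$ (so that no spurious contribution arises from the domain faces $u_j\in\{0,1\}$), and that the increment on each slab is exactly a single gradient entry with no cancellation. Because the resulting expression is exactly linear in $h$ on $(0,1/N)$, the limit defining $V_Q(f)$ is immediate, and passing from the supremum/limit definition to the closed form presents no further difficulty.
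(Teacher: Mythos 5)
Your proof is correct and follows essentially the same route as the paper's: for $h<1/N$ the increment $\Delta_{h\vct{e}_j}(f,\cdot)$ is supported on slabs of width $h$ along the interior grid hyperplanes, where it equals $N^{d/2}$ times a discrete difference, and integrating, summing over directions, and dividing by $h$ gives exactly $N^{-d/2+1}\|\mtx{x}\|_{TV_1}$. Your observation that the bound actually holds with equality (since the gradient entries with $\alpha_j=N$ are defined to be zero) is a correct, mild sharpening of the stated inequality.
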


\begin{proof}
For ${h} < \frac{1}{N}$, 
\begin{equation}
\label{delta1}
\Delta_{h \vct{e_k}} \big( f, \vct{u} \big) = \left\{ \begin{array}{ll} 
N^{d/2} (\mtx{x}_{\vct{{\ell}^{(k)}}} - \mtx{x}_{\vct{\ell}} ) & \frac{\ell_i}{N} - h \leq u_i \leq \frac{\ell_i}{N}, \nonumber \\
0, & \textrm{else}, 
\end{array} \right . 
\end{equation}
where 

\begin{equation}
\vct{{\ell}}^{(k)}_i = \left\{ \begin{array}{ll} 
\vct{\ell}_i  & i\ne k, \nonumber \\
\vct{\ell}_i + 1, & i = k. 
\end{array} \right .
\end{equation}
Thus
\begin{eqnarray}
| f |_{BV} &=& \lim_{h \rightarrow 0} \frac{1}{h} \sum_{k=1}^d\left[  \int_{0}^1 \int_{0}^1 \ldots \int_{0}^1 | f(\vct{u}+h\vct{e_k}) - f(\vct{u}) | \hspace{1mm} d\vct{u}   \right]  \nonumber\\
&=& \sum_{k=1}^d N^{d/2}\left[\sum_{\vct{\ell}} \frac{1}{N^{d-1}}| \mtx{x}_{\vct{\ell}^{(k)}}-\mtx{x}_{\vct{\ell}} | \right]  \nonumber\\
&\leq& N^{-d/2+1}\|\nabla\vct{x}\|_1 = N^{-d/2+1}\| \mtx{x} \|_{TV_1}. \nonumber
\end{eqnarray}
\end{proof}

Cohen, Dahmen, Daubechies, and DeVore showed in \cite{cddd} that the properly normalized sequence of rearranged wavelet coefficients associated to a function $f \in L_2(\Omega)$ of bounded variation is in \emph{weak-$\ell_1$}, and its weak-$\ell_1$ seminorm is bounded by the function BV seminorm.   Using different normalizations to those used in \cite{cddd} --- we use the $L_2$-normalization for the Haar wavelets as opposed to the $L_1$-normalization --- we consider the Haar wavelet coefficients $f^{e}_{I} = \scalprod{f, h_I^e}$ and consider the wavelet coefficient block $f_I = (f_I^e)_{e \in E} \in \C^{2^d-1}$ associated to those Haar wavelets supported on the dyadic cube $I$.  With this notation, Theorem 1.1 of \cite{cddd} applied to the Haar wavelet system over $L_2(Q)$ reads:

\begin{proposition}
\label{haardecaybv}
Let $d \geq 2$. Then there exists a constant $C > 0$ such that the following holds for all mean-zero $f \in BV(Q)$.  Let the wavelet coefficient block with $k$th largest $\ell_2$-norm be denoted by $f_{(k)}$, and suppose that this block is associated to the dyadic cube $I_{j,k}$ with side-length $2^{-j}$.  Then 
$$
\|f_{(k)}\|_2 \leq C \frac{ 2^{j(d-2)/2} |f |_{BV} }{k}.
$$
\end{proposition}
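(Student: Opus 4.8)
The plan is to obtain this proposition as a direct specialization of Theorem 1.1 of \cite{cddd} to the tensor-product Haar system, with the only genuine work being a careful change of normalization. Theorem 1.1 of \cite{cddd} asserts that for a mean-zero function of bounded variation in dimension $d \geq 2$, the rearranged sequence of its wavelet coefficients (in the normalization adapted to $BV$) lies in weak-$\ell_1$ with quasi-seminorm controlled by $|f|_{BV}$; equivalently, the coefficient of $k$th largest magnitude decays like $|f|_{BV}/k$. First I would record that the multivariate Haar system of Section~\ref{backgrndwave} is admissible for this theorem: it is generated by the $2^d - 1$ mother wavelets $h^{\vct e}$, $\vct e \in V$, each compactly supported with vanishing mean, and together with the scaling function it forms an orthonormal basis of $L_2(Q)$, so the estimate applies to the coefficients $\langle f, h_I^e\rangle$.

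The substantive step is translating this estimate into the $L_2$-normalization $\|h_I^e\|_{L_2(Q)} = 1$ used here. A Haar wavelet on a dyadic cube $I$ of side-length $2^{-j}$ is a fixed $\pm 1$ pattern scaled to be $L_2$-normalized, so it equals $2^{jd/2}$ times that pattern. On the other hand, the weak-$\ell_1$ bound of \cite{cddd} is clean in the scale-homogeneous normalization under which a single-cube coefficient block has $\ell_2$-norm comparable (via a Poincaré estimate) to the bounded-variation mass of $f$ on that cube, uniformly across scales; this normalization scales the same $\pm 1$ pattern by $2^{j}$ rather than $2^{jd/2}$. Hence an $L_2$-normalized coefficient exceeds its scale-homogeneous counterpart by exactly the factor $2^{jd/2}/2^{j} = 2^{j(d-2)/2}$. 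Multiplying the clean decay $\lesssim |f|_{BV}/k$ by this factor produces the stated bound, and when $d=2$ the factor is trivial, recovering the clean two-dimensional estimate used in \cite{nwtv}.

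Two bookkeeping points then remain. First, \cite{cddd} controls coefficients individually, whereas the proposition groups the $2^d - 1$ wavelet types sharing a dyadic cube $I$ into a block $f_I$; since all coefficients of a block sit at one scale $j$, the conversion factor is constant across the block, and the block of $k$th largest $\ell_2$-norm contains a coefficient whose individual rank is at least $k$, so the per-coefficient decay transfers to the block decay up to the dimensional constant $\sqrt{2^d - 1}$. Second, the mean-zero hypothesis lets us discard the single scaling-function coefficient, so that every surviving coefficient is a genuine wavelet coefficient governed by \cite{cddd}.

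The step I expect to be most delicate is the normalization conversion itself, precisely because the factor $2^{j(d-2)/2}$ depends on the scale $j$ of the block in question. For $d > 2$ this means ordering the blocks by $\ell_2$-norm in the $L_2$-normalization does not coincide with ordering them in the scale-homogeneous normalization for which \cite{cddd} gives a clean weak-$\ell_1$ decay; the formulation of the proposition sidesteps this by reporting the bound for the $k$th block in terms of its own scale $j = j(k)$. Getting this scale dependence right is the crux of the translation, and it is exactly this factor that is later absorbed against Lemma~\ref{bvtv} and the scale restriction $j \leq n-1$ (valid for the discrete embedding $f \in \Sigma_N^d$) when deducing the clean estimate of Proposition~\ref{cor:cdpx}.
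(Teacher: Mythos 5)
Your route is the same as the paper's: the paper gives no independent proof of this proposition, presenting it simply as Theorem 1.1 of \cite{cddd} rewritten for the Haar system in the $L_2$-normalization, and your conversion dictionary (the scale-homogeneous amplitude $2^j$, the factor $2^{jd/2}/2^{j} = 2^{j(d-2)/2}$, the $\sqrt{2^d-1}$ blocking loss, discarding the scaling coefficient via the mean-zero hypothesis) is the correct one. The genuine gap is in the step you yourself call the crux: the ordering mismatch between the two normalizations. You assert that the proposition ``sidesteps'' this mismatch because the bound is reported with the block's own scale $j = j(k)$. That is not a valid resolution: if blocks are ranked by their $L_2$-normalized $\ell_2$-norms, as the statement literally reads, the inequality is false for every $d \geq 3$, no matter how the scale factor is inserted. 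Take $f = h_{0}^{\vct{e}} + 2\sum_{i=1}^{M} h_{J_i}^{\vct{e}}$, where $h_{0}^{\vct{e}}$ is an $L_2$-normalized Haar wavelet on the unit cube, the $J_i$ are disjoint dyadic cubes of side length $2^{-j}$, and $M = \lceil 2^{j(d-2)/2} \rceil$. Each fine-scale term has BV seminorm $\lesssim d\, 2^{-j(d-2)/2}$, so $|f|_{BV} \lesssim d$ uniformly in $j$; yet the $L_2$-ranking places the $M$ fine blocks (norm $2$) ahead of the coarse block (norm $1$), which therefore sits at rank $k = M+1$ with scale $j(k)=0$, and the claimed bound would force $1 \leq C|f|_{BV}/(M+1) \lesssim C d\, 2^{-j(d-2)/2}$, false once $j$ is large. (For $d=2$ the construction collapses to $M=1$, which is why the two-dimensional estimate used in \cite{nwtv} is unaffected.)

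What your argument actually proves, and what the proposition should say, is the weak-type bound in the scale-homogeneous normalization: $\#\{ I : 2^{-j(d-2)/2}\|f_I\|_2 > \lambda \} \leq C|f|_{BV}/\lambda$, equivalently the displayed inequality with $f_{(k)}$ the block of $k$th largest \emph{renormalized} norm $2^{-j(d-2)/2}\|f_I\|_2$. You have in fact located an imprecision in the paper itself --- its statement carries the same ordering defect --- but the repair is to change the ranking, not to lean on the $j(k)$-dependence of the bound. The weak-type form is also all that is needed downstream: in deriving Proposition~\ref{cor:cdpx} every discrete block has scale $j \leq \log_2 N - 1$, so the conversion factor is bounded by its worst-case value and a counting argument recovers the discrete statement with blocks ranked by ordinary $\ell_2$-norm. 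A second, smaller slip: a block of larger $\ell_2$-norm need not contain a larger individual coefficient, so ``the block of $k$th largest $\ell_2$-norm contains a coefficient whose individual rank is at least $k$'' is incorrect as stated; the right blocking step is that each of the top $k$ blocks contains a coefficient of magnitude at least $\|f_{(k)}\|_2/\sqrt{2^d-1}$, hence at least $k$ distinct coefficients exceed this threshold, and the weak-$\ell_1$ count bound of \cite{cddd} then gives $\|f_{(k)}\|_2 \leq \sqrt{2^d-1}\, C|f|_{BV}/k$ in the renormalized ordering.
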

Proposition \ref{cor:cdpx} results by translating Proposition \ref{haardecaybv} to the discrete setting of $\C^{N^d}$ and appealing to Lemma \ref{bvtv}.  We note that a stronger version of this result was provided for the 2-dimensional Haar wavelet basis in \cite{cdpx} and used in the proofs in \cite{nwtv}. 

\subsection{Proof of Proposition~\ref{cone-tube}}

Assume that the cone and tube constraints are in force.  Let $R_1 = R_1'\times [d] \subset R^c$ contain the $4s$ largest blocks of $\vct{h}$ on $R^c$, let $R_2 = R_2' \times [d]$ contain the next $4s$ largest, and so on.  We write $\vct{h}_{R_j}$ to mean the array $\vct{h}$ restricted to its elements indexed by $R_j$, and write $\vct{h}_{\alpha}$ to denote the array $({h}_{(\alpha,\ell)})_{\ell=1}^d$ at pixel $\alpha$.  Thus for any $\alpha \in R'_{j+1}$,
$$
\|\vct{h}_{\alpha}\|_2 \leq \frac{1}{4s} \sum_{{\beta}\in R'_j} \|\vct{h}_{{\beta}}\|_2 = \frac{1}{4s}\|\vct{h}_{R_j}\|_{1,2}.
$$

Therefore we have
$$
\|\vct{h}_{R_{j+1}}\|_2^2 = \sum_{\alpha\in R'_{j+1}}\|\vct{h}_{\alpha}\|_2^2 \leq \sum_{\alpha\in R'_{j+1}}\frac{1}{(4s)^2}\|\vct{h}_{R_j}\|_{1,2}^2 = \frac{1}{4s}\|\vct{h}_{R_{j}}\|_{1,2}^2.
$$

Combining this with the cone constraint yields

\begin{align*}
\sum_{j\geq 2}\|\vct{h}_{R_{j}}\|_2 \leq \frac{1}{\sqrt{4s}}\sum_{j\geq 1}\|\vct{h}_{R_{j}}\|_{1,2} &= \frac{1}{\sqrt{4s}}\sum_{\alpha\in (R')^c}\|\vct{h}_{\alpha}\|_2 \\
= \frac{1}{\sqrt{4s}}\|\vct{h}_{R^c}\|_{1,2} \leq \frac{1}{\sqrt{4s}}\|\vct{h}_{R}\|_{1,2} + \frac{1}{\sqrt{4s}}\sigma &\leq \frac{1}{2}\|\vct{h}_{R}\|_{2} + \frac{1}{\sqrt{4s}}\sigma,
\end{align*}

where in the last line we have utilized the fact that $\|\vct{h}_{R}\|_{1,2} \leq \sqrt{s}\|\vct{h}_{R}\|_{2}$.  Next, the tube constraint gives

\begin{align*}
\sqrt{2d}\varepsilon \geq \|\mathcal{B}(\vct{h})\|_2 &\geq \sqrt{1-\delta}\|\vct{h}_R + \vct{h}_{R_1}\|_2 - \sqrt{1+\delta}\sum_{j\geq 2}\|\vct{h}_{R_{j}}\|_2\\
&\geq \sqrt{1-\delta}\|\vct{h}_R + \vct{h}_{R_1}\|_2 - \sqrt{1+\delta}\left(\frac{1}{2}\|\vct{h}_{R}\|_{2} + \frac{1}{\sqrt{4s}}\sigma \right) \\
&\geq \left(\sqrt{1-\delta} - \frac{1}{2}\sqrt{1+\delta}\right)\|\vct{h}_R + \vct{h}_{R_1}\|_2 - \sqrt{1+\delta}\left(\frac{1}{\sqrt{4s}}\sigma\right).
\end{align*}

Using the fact that $\delta < 1/3$, this implies that
$$
\|\vct{h}_R + \vct{h}_{R_1}\|_2 \leq 10\sqrt{d}\varepsilon + \frac{3}{\sqrt{s}}\sigma.
$$

The bound~\eqref{eq:h2} then follows since

\begin{align*}
\|\vct{h}\|_2 &\leq \|\vct{h}_R + \vct{h}_{R_1}\|_2 + \sum_{j\geq 2}\|\vct{h}_{R_{j}}\|_2 \\
&\leq \|\vct{h}_R + \vct{h}_{R_1}\|_2 + \frac{1}{2}\|\vct{h}_{R}\|_{2} + \frac{1}{\sqrt{4s}}\sigma \\
&\lesssim  \sqrt{d}\varepsilon + \frac{1}{\sqrt{s}}\sigma.
\end{align*}  

Similarly, the bound~\eqref{eq:h1} follows from the cone constraint,
\begin{align*}
\|\vct{h}\|_{1,2} &\leq 2\|\vct{h}_R\|_{1,2} + \sigma\\
&\leq 2\sqrt{s}\|\vct{h}_R\|_{2}+ \sigma\\
&\lesssim \sqrt{s}(\sqrt{d}\varepsilon + \frac{1}{\sqrt{s}}\sigma) + \sigma\\
&= \sqrt{sd}\varepsilon + \sigma,
\end{align*} 

which completes the proof.

\subsection*{Acknowledgment}
We would like to thank John Doyle, Christina Frederick, and Mark Tygert for helpful improvements and insights.  We would also like to thank the reviewers of the manuscript for their thoughtful suggestions which significantly improved the manuscript.

\bibliographystyle{plain}
\bibliography{haar}

\end{document}